\numberwithin{equation}{section}
\newcommand{\omegae}{\omega_{\varepsilon}}
\newcommand{\ue}{u_{\varepsilon}}
\newcommand{\we}{w_{\varepsilon}}
\newcommand{\chie}{\chi_{\varepsilon}}
\newcommand {\half}{ \frac{1}{2} }
\newcommand{\N}{\mathbb{N}} 
\newcommand{\R}{\mathbb{R}}
\newcommand{\norm}[2]{{\left\| #2 \right\|}_{#1}}
\newcommand{\normal}{\nu}
\newcommand{\defeq}{\mathrel{\mathop:}=}
\newcommand{\eqdef}{=\mathrel{\mathop:}}
\newtheorem{theorem}{Theorem}
\newtheorem{proposition}{Proposition}
\newtheorem{lemma}{Lemma}
\newtheorem{assumption}{Assumption}
\newtheorem{remark}{Remark}
\newcommand{\omegaen}{{\omega_{\varepsilon_n}}}
\newcommand{\uen}{{u_{\varepsilon_n}}}
\newcommand{\wen}{{w_{\varepsilon_n}}}
\newcommand{\uo}{u_\omega}
\newcommand{\wo}{w_\omega}
\newcommand{\vj}{{v^{(j)}}}
\newcommand{\venj}{{v^{(j)}_{\varepsilon_n}}}
\newcommand{\Ue}{{U_{\varepsilon}}}
\newcommand{\We}{{W_{\varepsilon}}}
\newcommand{\Uen}{{U_{\varepsilon_n}}}
\newcommand{\Wen}{{W_{\varepsilon_n}}}
\newcommand{\Ken}{{K_{\varepsilon_n}}}
\newcommand{\dx}{\text{d}x}
\newcommand{\dt}{\text{d}t}
\newcommand{\dmu}{\text{d}\mu}
\newcommand{\dnuj}{\text{d}\nu_j}
\begin{document}

\title{On the determination of ischemic regions in the monodomain model of cardiac electrophysiology from boundary measurements}
\author{
  Elena Beretta\footnotemark[1],\
	Cecilia Cavaterra\footnotemark[2],\
  Luca Ratti\footnotemark[3]
}

\footnotetext[1]{NYU Abu Dhabi (UAE),  Politecnico di Milano (Italy) \texttt{eb147@nyu.edu, elena.beretta@polimi.it}}
\footnotetext[2]{Universit\`{a} degli Studi di Milano  (Italy), IMATI-CNR (Italy) \texttt{cecilia.cavaterra@unimi.it}}
\footnotetext[3]{University of Helsinki (Finland) \texttt{luca.ratti@helsinki.fi}}

\maketitle

\begin{abstract}
In this paper we consider the monodomain model of cardiac electrophysiology. After an analysis of the well-posedness of the model we determine an asymptotic expansion of the perturbed potential due to the presence of small conductivity inhomogeneities (modelling small ischemic regions in the cardiac tissue) and use it to detect the anomalies from partial boundary measurements. This is done by determining the topological gradient of a suitable boundary misfit functional. The robustness of the algorithm is confirmed by several numerical experiments.
\end{abstract}

\section{Introduction}
\label{intro}
Cardiac ischemia consists in a restriction of blood supply to the heart tissue usually caused by atherosclerosis or coronary syndrome. The shortage of oxygen may lead to dysfunction of the cell metabolism and eventually to their death. The possible outcomes range from ventricular arrhythmia, fibrillation and ultimately to myocardial infarction. 

The ischemic heart syndrome is the most common cardiovascular disease, and the most common cause of death. 
Hence, the detection of ischemic regions at early stage of their development is of primary importance. This is usually performed by imaging techniques such as echocardiography, gamma ray scintigraphy or magnetic resonance imaging. Nevertheless, the most common test for patients not exhibiting evident symptoms is the electrocardiogram (ECG), which consists in recording electrical impulses across the thorax by means of a set of electrodes. Physicians are often able to identify myocardial ischemia by analysing the evolution of the voltage recorded in the ECG leads although with several technical difficulties (see the mathematical studies in \cite{art:ECG} and \cite{art:gerbeau}).

The approach we propose in this paper is to obtain information regarding the electrical functioning of the tissue (and ultimately the presence of ischemic regions) from the knowledge of the electrical potential on the surface of the heart. 
When employing the potential on the  epicardium, i.e., the external boundary of the heart, this can be considered as a natural extension of the well-known mathematical problem often referred to as \textit{the inverse problem of electrocardiography}, which consists in using the ECG recordings to compute the potential distribution on the epicardium. 
Alternatively, we remark that it is also possible to obtain electrical measurements on the endocardium, namely the internal boundaries of the heart cavities. Although much more invasive than the ECG, intracardiac ECG (iECG) has become a standard of care in patients with symptoms of heart failure, and allows to get a map of the endocardiac potential by means of non-contact electrodes carried by a catheter inside a heart cavity.
We therefore assume that the distribution of the electrical potential is available on some portion of the heart boundary (on the epicardium, in the case of ECG data, or on the endocardium, in the case of iECG measurements). After a reliable model is introduced for the description of the evolution of the electrical potential within the heart, the problem of detecting ischemic regions is hence formulated as an inverse boundary value problem of identifying parameters in a nonlinear reaction diffusion system from boundary data.

In this paper we focus on the determination of small ischemias from boundary measurements of the electrical potential, generalizing the results obtained in previous papers (see \cite{art:BCMP},\cite{art:BMR},\cite{art:bccmr}) to the monodomain time-dependent model of cardiac electrophysiology.

In general, this model is described by a system consisting in a semilinear parabolic equation coupled with a system of nonlinear ordinary differential equations, where the state variables are the transmembrane potential across the cell membrane, the concentrations of ionic species and the gating variables describing the activity of ionic channels in the membrane.
It is well known (see \cite{book:pavarino} and \cite{art:boulakia}) that the monodomain model can be derived from the more complex bidomain one, introduced for the first time in \cite{phd:tung}, for instance by assuming proportionality between intracellular and extracellular conductivity tensors. In particular, in this case, the corresponding conductivity tensor becomes the harmonic mean of the extra and intracellular conductivities. On the other hand, this choice of the conductivity tensor turns out to be the best one to approximate the electrical propagation described by the bidomain model (see, e.g., \cite{monovsbi1},\cite{monovsbi2}).    

As a first attempt, we limit ourselves to analyze a coupled system of two equations in the potential $u$ and the recovery
variable $w$. This corresponds to a large class of phenomenological models, which are characterized by the choice of the nonlinear terms appearing in the partial differential equation and in the ordinary differential equation. Among the most important two-equation systems (see \cite{book:pavarino} for a general overview)
we mention FitzHugh-Nagumo, Rogers-McCulloch and Aliev-Panfilov models. Throughout this paper, we focus on the commonly used version of the Aliev-Panfilov model (originally introduced in \cite{art:alpanf}), even though the analysis could be extended also to the other two.
\par
In order to detect ischemic regions, we extend the approach of \cite{art:bccmr}, determining a rigorous asymptotic expansion for the perturbed boundary potential due to small conductivity anomalies. To accomplish this task, we need an accurate analysis of the well-posedness of the direct problem for the coupled
system in the case of discontinuous anisotropic coefficients and suitable regularity estimates for solutions. In particular, we establish a comparison principle for this class of systems, which to our knowledge was not present in the literature.
Here, we consider the case of an insulated heart and we assume to have measurements of the potential on a portion of the boundary.
\par
The theory of detection of small conductivity inhomogeneities from boundary measurements via asymptotic techniques has been developed in the last three decades in the framework of Electrical Impedance Tomography (see, e.g., \cite{book:ammari-kang},\cite{art:cfmv},\cite{art:ammari2012}). A similar approach has also been used in Thermal Imaging (see, e.g., \cite{art:aikk}).
Here, we are able to extend in a non trivial way the results obtained previously in \cite{art:BMR} and \cite{art:bccmr} for simplified versions of the monodomain model making use of fine regularity for the solutions to nonlinear reaction diffusion systems. In particular, we establish a rigorous expansion for the perturbed transmembrane potential and use this to implement an effective reconstruction algorithm.
\par
The paper is organized as follows. In Section \ref{direct} we analyze the well-posedness of the forward problem in the unperturbed and perturbed case.
Section \ref{inverse} is devoted to obtaining energy estimates on the difference between the perturbed and unperturbed electrical potential and an asymptotic expansion of suitable integral terms involving such difference on the boundary. In Section \ref{algorithm} we describe our topological-based optimization algorithm and derive rigorously the topological gradient of a suitable mismatch functional. This is obtained by using the results of Section \ref{inverse} and some interior regularity results for the solution of parabolic systems. Finally, in Section \ref{results} we outline the numerical implementation of the proposed algorithm, relying on the Finite Element Method for the discrete formulation of the problem. A significant set of numerical experiments is provided in order to assess the effectiveness of the reconstruction, even in presence of data noise.

\section{Analysis of the direct problem}
 \label{direct}
The well-posedness analysis of the monodomain system has been object of several studies.  We refer to \cite[Chapter 3]{book:pavarino} for a general overview. In \cite{art:BCP} a result of existence and uniqueness of weak solution is proved for
the FitzHugh-Nagumo, the Aliev-Panfilov and the Rogers-MacCulloch models by means of a Faedo-Galerkin procedure. A result of existence of strong solutions, local in time, is also derived. In \cite{art:veneroni}, instead, results of well-posedness are obtained for a wider range of models, on the base of a fixed point argument.
\par
Regarding the regularity of the solutions of the monodomain system, we report a result in \cite{pierrecoudiere} for FitzHugh-Nagumo, Aliev-Panfilov and Rogers-MacCulloch models: if no discontinuities are present in the coefficients of the system, existence and uniqueness of strong solutions is guaranteed, locally in time (see for instance \cite{book:smoller} and \cite{book:henry}). A comparison principle is also provided, by means of the tool of invariant sets, allowing to prove existence of global solutions. We also report a result of local existence of classical solutions for the bidomain model, recently obtained in \cite{giga}.
\par
In this section we focus on the monodomain system in the case of smooth diffusion coefficient and reaction term, corresponding to the case of the healthy tissue
(unperturbed case) and in the case of discontinuities in the diffusion coefficient and in the reaction term, corresponding to the presence of an ischemia in the heart
tissue (perturbed case). We state an existence, uniqueness and comparison result for classical solutions of the unperturbed case (a proof of
which, alternative to the approach of \cite{pierrecoudiere}, is proposed in \cite[Chapter 6]{phd:Luca}) and prove a result regarding existence, uniqueness and regularity of weak solutions in the perturbed case.
\par
In particular, the initial and boundary value problem associated to the unperturbed monodomain system is the following one
\begin{equation}
\left\{
\begin{aligned}
\partial_t u - div(K_0\nabla u) + f(u,w) &= 0 \qquad &\text{in } Q_T, \\
K_0 \nabla u \cdot \normal &= 0 \qquad &\text{on }  \Gamma_T, \\
\partial_t w + g(u,w) &= 0 \qquad &\text{in }  Q_T, \\
u(\cdot,0) = u_0 \qquad w(\cdot,0) &= w_0 \qquad &\text{in }  \Omega,
\end{aligned}
\right.
\label{eq:formaforte}
\end{equation}
where $\Omega$ is the region occupied by the hearth tissue, $\nu$ is the outward unit normal vector to the boundary $\partial\Omega$, $Q_T := \Omega \times (0,T)$ and $\Gamma_T: \partial\Omega \times (0,T)$. A slightly different formulation of the monodomain model (see, e.g., \cite{book:pavarino}) involves the presence of a source term in the right-hand side of the first equation in \eqref{eq:formaforte}, representing an applied current, during a limited time window (the initial activation of the tissue). We replace the effect of such a current with the presence of a non-null initial value $u_0$. Since the modeling differences between the two formulations are negligible after the first instants, we choose the one proposed in \eqref{eq:formaforte} more suited for the mathematical analysis of the problem.

In presence of an ischemia $\omega\subset\Omega$, the perturbed case is described by the model
\begin{equation}
\left\{
\begin{aligned}
\partial_t \uo - div(K_\omega \nabla \uo) + (1-\chi_\omega)f(\uo,\wo) &= 0 \qquad &\text{in } Q_T, \\
K_\omega \nabla \uo \cdot \normal &= 0 \qquad &\text{on }  \Gamma_T, \\
\partial_t \wo + g(\uo,\wo) &= 0 \qquad &\text{in }  Q_T, \\
\uo(\cdot,0) = u_0  \qquad \wo(\cdot,0) &= w_0 \qquad &\text{in }  \Omega,
\end{aligned}
\right.
\label{eq:perturbed}
\end{equation}
where  $\chi_\omega$ is the indicator function of $\omega$ and $K_\omega = K_0 - (K_0-K_1)\chi_\omega$. We now specify the requirements on the domain, the coefficients and the source terms.
\begin{assumption}

\begin{enumerate}
	\item $\Omega \subset \R^d$ bounded domain, $d=2,3$, and $\partial \Omega \in C^{2+\alpha}$;
	
	\item the inclusion $\omega\subset\Omega$ is well separated from the boundary, i.e., 
	\begin{equation}
	\vspace{-0.25cm}
	\exists \ \mathcal{C}_0 \text{ compact subset of } \Omega \text{ s.t. } \omega \subset \mathcal{C}_0 \text{ and } dist(\mathcal{C}_0,\partial\Omega) \geq d_0 >0.
	\vspace{-0.25cm}
	\label{eq:separ}
\end{equation}
	\vspace{-0.25cm}
	\item $K_0, K_1 \in C^{2}(\overline{\Omega};\R^{d \times d})$ are symmetric matrix-valued functions in $\Omega$;
	$\forall x \in \overline{\Omega}$, the matrices $K_0(x)$ and $K_1(x)$ admit $d$ positive eigenvalues $k_{0,1} \leq \ldots \leq k_{0,d}$ and $k_{1,1} \leq \ldots \leq k_{1,d}$ respectively, associated to the same eigenvectors $\vec{e}_1(x),\ldots \vec{e}_d(x)$ such that $k_{1,i} \leq k_{0,i}$ $\forall i = 1,\ldots,d$;
	\item $u_0 \in C^{2+\alpha}(\overline{\Omega})$, $w_0 \in C^2(\overline{\Omega})$, and $K_0 \nabla u_0 \cdot \normal = 0$
on $\partial\Omega$;
	\item we assume $f,g$ as in the Aliev-Panfilov model, namely:
	\begin{equation}
	\vspace{-0.25cm}
f(u,w) = Au(u-a)(u-1) + uw \qquad g(u,w) = \epsilon (Au(u-1-a) +  w),
\label{eq:AP}
\end{equation}
being $A, \epsilon > 0$ and $0<a<1$.
\end{enumerate}
\label{ass:1}
\end{assumption}
\begin{remark}
The requirements on the matrix-valued functions $K_0$ and $K_1$ are satisfied by the conductivity tensors prescribed 
in the model under consideration (see \cite{book:pavarino,book:sundes-lines}). According to experimental evidence,
the cardiac tissue can be modeled as an orthotropic material, characterized by the presence of fibers and sheets, which define the conductivity eigenvectors. Moreover,
the presence of an ischemia does not affect the direction of the fibers, but reduces the value of all the associated eigenvalues. 
From now on, we will indicate by $k_{min}$ the minimum eigenvalue of $K_1$ and by $k_{max}$ the maximum eigenvalue of $K_0$.
\end{remark}
\begin{remark}
As an immediate consequence of \eqref{eq:AP}, $f$ and $g$ satisfy
the \textit{Tangency condition} on the rectangle $S \defeq [0,1]\times [0,\frac{A(a+1)^2}{4}]$, (see \cite{amann}), i.e., indicating by $\vec{p}$ a generalized outward normal on $\partial S$ ( for all $(\xi_1,\xi_2) \in \partial S$,
$\vec{p}(\xi_1,\xi_2)$ is such that $\vec{p}(\xi_1,\xi_2)\cdot (\xi_1,\xi_2) \geq \vec{p}(\xi_1,\xi_2)\cdot (\eta_1,\eta_2)$ $\forall (\eta_1,\eta_2) \in S$)
then,
\begin{equation}
		\vec{p}(\xi_1,\xi_2) \cdot \left( \begin{aligned} -f(\xi_1,\xi_2) \\ -g(\xi_1,\xi_2)\end{aligned}\ \right) \leq 0 \qquad \forall (\xi_1,\xi_2) \in \partial S.
	\label{eq:nagumo}
	\end{equation}
Moreover, the functions $f,g$ are Lipschitz continuous on $S$ with constants $L_f, L_g \leq L$.
\end{remark}
\begin{remark}
For the sake of brevity, in all the formulas we avoid to indicate time and space integration variables with respect to the classical Lebesgue measure,
unless it is necessary.
\end{remark}
We now outline the main results regarding the well-posedness of the problems \eqref{eq:formaforte} and \eqref{eq:perturbed}.
\begin{theorem}[Unperturbed problem]
Let Assumption \ref{ass:1} holds, and suppose that, $\forall x \in \overline{\Omega},$ $(u_0(x),w_0(x)) \in S$.
Then, problem \eqref{eq:formaforte} admits a unique classical solution $(u,w)$, namely $u \in C^{2+\alpha,1+\alpha/2}(\overline{Q_T})$,
$w \in C^{\alpha,1+\alpha/2}(\overline{Q_T})$. Moreover, $(u(x,t),w(x,t)) \in S$, for each $(x,t) \in \overline{Q_T}$.
\label{th:1}
\end{theorem}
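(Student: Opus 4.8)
The plan is to follow the classical route for semilinear parabolic systems: truncate the (only locally Lipschitz) reaction terms to globally Lipschitz ones, build a local solution of the truncated system by a contraction argument, bootstrap its regularity with linear parabolic theory and ODE theory, prove that the trajectory remains in the rectangle $S$ — so that the truncation is inactive and global existence follows from the resulting a priori bound — and close with a Grönwall estimate for uniqueness. Concretely I would first choose $\tilde f,\tilde g\in C^\infty(\R^2)$ which coincide with $f,g$ of \eqref{eq:AP} on a neighbourhood of $S$ and are globally bounded and globally Lipschitz; any classical solution of \eqref{eq:formaforte} with $\tilde f,\tilde g$ in place of $f,g$ whose range lies in $S$ solves the original problem.

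For local existence I would iterate the map $(\hat u,\hat w)\mapsto(u,w)$ in which $u$ solves the \emph{linear} Neumann parabolic problem $\partial_t u-\diverg(K_0\nabla u)=-\tilde f(\hat u,\hat w)$ with $K_0\nabla u\cdot\normal=0$ on the lateral boundary and $u(\cdot,0)=u_0$, and $w$ is obtained by integrating in time the ODE $\partial_t w=-\tilde g(\hat u,\hat w)$, $w(\cdot,0)=w_0$, pointwise in $x$; boundedness and Lipschitz continuity of $\tilde f,\tilde g$, together with the parabolic maximum principle, make this a contraction on a ball of $C(\overline\Omega\times[0,\tau])^2$ for $\tau$ small, and — the truncated terms being bounded — there is no finite-time blow-up, so the local solution extends to $[0,T]$. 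Regularity then comes by bootstrapping: from $u,w\in C$ one gets, via $L^p$-parabolic estimates and Sobolev embedding together with a Grönwall estimate in $x$ for the ODE, that $u\in C^{\alpha,\alpha/2}$ and $w\in C^{\alpha}$ in the space variable; hence $\tilde f(u,w)\in C^{\alpha,\alpha/2}(\overline{Q_T})$, and the parabolic Schauder estimates of Ladyzhenskaya--Solonnikov--Ural'tseva — using $K_0\in C^2$, $u_0\in C^{2+\alpha}$ and the compatibility condition $K_0\nabla u_0\cdot\normal=0$ — yield $u\in C^{2+\alpha,1+\alpha/2}(\overline{Q_T})$; re-reading $\partial_t w=-\tilde g(u,w)\in C^{\alpha,\alpha/2}$ then gives $w\in C^{\alpha,1+\alpha/2}(\overline{Q_T})$.

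The heart of the matter is the invariance $(u(x,t),w(x,t))\in S$ for all $(x,t)$. Since only the $u$-equation carries diffusion, the four faces of $S=[0,1]\times[0,\tfrac{A(a+1)^2}{4}]$ are handled separately: $\tilde f(0,w)=0$ and $\tilde f(1,w)=w$, so $\underline u\equiv0$ is a subsolution and $\bar u\equiv1$ a supersolution whenever $w\ge0$; for the recovery variable, $\partial_t w|_{w=0}=-\epsilon Au(u-1-a)\ge0$ for $u\in[0,1]$ (since $u-1-a<0$), while $\partial_t w|_{w=M}=-\epsilon\bigl(Au(u-1-a)+M\bigr)\le0$ because $\max_{u\in[0,1]}Au(1+a-u)=A(1+a)^2/4=M$, the maximum being attained at $u=(1+a)/2\le1$. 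These inequalities are exactly the content of the tangency condition \eqref{eq:nagumo}; since the bounds on $u$ require bounds on $w$ and conversely, I would close the loop by a single Grönwall inequality for the four one-sided defects $(u-1)_+,\ (-u)_+,\ (w-M)_+,\ (-w)_+$ treated simultaneously (or, equivalently, by invoking the theory of invariant rectangles for reaction--diffusion systems, whose hypotheses reduce to \eqref{eq:nagumo}). With $(u,w)\in S$ in hand, $\tilde f=f$ and $\tilde g=g$ along the solution, so $(u,w)$ solves \eqref{eq:formaforte}; uniqueness follows by subtracting two $S$-valued solutions, testing the $u$-difference equation with the $u$-difference, using coercivity of $K_0$ and the Lipschitz constants $L_f,L_g$ of $f,g$ on $S$, estimating the $w$-difference through the ODE, and applying Grönwall.

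I expect the main obstacle to be precisely this invariance/comparison step: because the system is genuinely coupled and its two components have different principal parts, the a priori bounds cannot be obtained sequentially and one must intertwine the maximum principle for $u$ with ODE comparison for $w$ within a single argument; a secondary technical nuisance is the Hölder bookkeeping that matches the regularity of $w$ produced by the ODE with the source regularity demanded by the parabolic Schauder estimates, especially near $t=0$ where the compatibility condition is used.
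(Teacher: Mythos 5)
Your argument is essentially correct, but it is not the route the paper takes: the paper does not reproduce a proof of Theorem \ref{th:1} at all, delegating it to the ``classical fixed point argument'' of \cite[Chapter 8, Sections 9 and 11]{book:pao} and to the detailed write-up in \cite[Theorem 6.1]{phd:Luca}. That framework, for coupled parabolic--ordinary systems, is the method of (coupled) upper and lower solutions with monotone iteration: the constant faces of $S$ serve as an ordered pair of upper/lower solutions, and the iteration produces monotone sequences converging to the solution, so that existence, uniqueness and the confinement $(u,w)\in S$ come out of a single construction; it requires checking a mixed quasimonotonicity structure of $(f,g)$ on $S$ (note that $g_u=\epsilon A(2u-1-a)$ changes sign on $[0,1]$, so the coupled rather than the simple iteration is needed). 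You instead decouple existence from invariance --- Banach contraction for a globally Lipschitz truncation, Schauder bootstrap, then the tangency condition \eqref{eq:nagumo} to show $S$ is invariant --- which is more modular and makes the role of \eqref{eq:nagumo} transparent, but is closer in spirit to the invariant-set approach of \cite{pierrecoudiere}, to which the paper explicitly presents its own proof as an alternative. The one step you should not leave at the level of ``a single Gr\"onwall inequality or, equivalently, invariant rectangles'' is the invariance itself: the $w$-equation carries no diffusion and the truncated nonlinearities satisfy the inward-pointing condition only on $\partial S$, so the one-sided bounds on the four defects $(u-1)_+$, $(-u)_+$, $(w-M)_+$, $(-w)_+$ must be extracted from the global Lipschitz continuity of $\tilde f,\tilde g$ combined with \eqref{eq:nagumo} (a pointwise estimate of the form $-\tilde f(u,w)(u-1)_+\le C\bigl((u-1)_+^2+(-w)_+^2+(u-1)_+(w-M)_+\bigr)$ and its analogues), or you must invoke a version of the invariant-rectangle theorem that is explicitly valid for degenerate parabolic--ODE systems; the facts that the region is a coordinate box and the diffusion matrix is diagonal are exactly what make this legitimate. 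Your face-by-face verification of \eqref{eq:nagumo} for the Aliev--Panfilov nonlinearities \eqref{eq:AP} and the concluding Schauder/Gr\"onwall steps are correct.
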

The proof of Theorem \ref{th:1} is derived by means of classical fixed point argument (see \cite[Chapter 8, Sections 9 and 11]{book:pao}).
A detailed proof is reported in \cite[Theorem 6.1]{phd:Luca}.

Regarding the perturbed problem \eqref{eq:perturbed}, we note that, although the conductivity tensor and the nonlinear term are discontinuous, we can extend the results obtained in \cite{art:BCP} thanks to the uniform ellipticity to the boundedness of the conductivity tensors and to the form of the reaction term, deriving the following existence and uniqueness result.
\begin{theorem}[Perturbed problem]
Let the Assumption \ref{ass:1} holds, and suppose that, $\forall x \in \overline{\Omega},$ $(u_0(x),w_0(x)) \in S$. Then, problem \eqref{eq:perturbed} admits
a unique weak solution, 
i.e., a couple $(\uo,\wo)$ such that $\uo \in L^2(0,T;H^1(\Omega))\cap L^\infty(0,T;L^2(\Omega))$, $\partial_t \uo \in L^2(0,T;H^*)$,
$\wo \in L^\infty(0,T;L^2(\Omega))$, $\partial_t \wo \in L^2(Q_T)$ and satisfying, for a.e. $t \in (0,T)$,
\begin{equation}
\begin{aligned}
\langle \partial_t \uo, \varphi \rangle_* + \int_\Omega K_\omega \nabla \uo \cdot \nabla \varphi + \int_\Omega (1-\chi_\omega)f(\uo,\wo) \varphi &= 0
\qquad &\forall \varphi \in H^1(\Omega),\\
\int_\Omega \partial_t \wo \psi + \int_\Omega g(\uo,\wo)\psi &= 0 \qquad &\forall \psi \in L^2(\Omega).
\end{aligned}
\label{eq:weak}
\end{equation}
Moreover, $\uo \in C^{\alpha,\alpha/2}(\overline{Q_T})$, $\wo \in C^{\alpha,\alpha/2}(\overline{Q_T})$ and $(\uo(x,t),\wo(x,t)) \in S$,  $\forall \, (x,t) \in \overline{Q_T}$.

\label{th:2}
\end{theorem}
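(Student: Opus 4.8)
\noindent The plan is to construct the solution of a truncated, globally Lipschitz version of the system by a Faedo--Galerkin scheme (as in \cite{art:BCP}, the only new feature being the discontinuity of $K_\omega$ across $\partial\omega$, which is harmless), to confine this solution to the rectangle $S=[0,1]\times[0,M]$, with $M=\tfrac{A(a+1)^2}{4}$, by an invariant--region argument based on the tangency condition \eqref{eq:nagumo}, thereby recovering a genuine solution of \eqref{eq:perturbed}, and finally to bootstrap to H\"older regularity via the De Giorgi--Nash--Moser theory for parabolic equations with bounded measurable coefficients.

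First I would replace $f,g$ by bounded, globally Lipschitz functions $\tilde f,\tilde g$ coinciding with $f,g$ on an open neighbourhood of $S$. For the system with $\tilde f,\tilde g$, the bilinear form $(\varphi,\psi)\mapsto\int_\Omega K_\omega\nabla\varphi\cdot\nabla\psi$ is bounded and coercive up to a zero--order term, since $k_{min}|\xi|^2\le K_\omega(x)\xi\cdot\xi\le k_{max}|\xi|^2$ for a.e. $x\in\Omega$ (the jump of $K_\omega$ is irrelevant to this bound), and the $w$--equation $\partial_t w=-\tilde g(u,w)$ is affine in $w$, hence solvable for fixed $u$ by Duhamel's formula. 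Testing the Galerkin equations with $u$ and with $w$ and using Gronwall's lemma gives bounds, uniform in the discretization, for $u$ in $L^\infty(0,T;L^2(\Omega))\cap L^2(0,T;H^1(\Omega))$ and for $w$ in $L^\infty(0,T;L^2(\Omega))$; the equations then give $\partial_t u\in L^2(0,T;H^*)$ and $\partial_t w\in L^2(Q_T)$. An Aubin--Lions argument yields strong $L^2(Q_T)$ convergence of the $u$--components, enough to pass to the limit in the (globally Lipschitz) nonlinearities, producing a weak solution $(u,w)$ of the truncated problem; uniqueness for the truncated problem follows by subtracting two solutions, testing with the difference, and invoking the global Lipschitz bounds together with Gronwall.

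The crucial step is to show that any weak solution $(u,w)$ in the regularity class of the statement satisfies $(u(x,t),w(x,t))\in S$ for a.e. $(x,t)$, so that $\tilde f,\tilde g$ reduce to $f,g$ along it. One tests the $u$--equation with $(u-1)^+$ and with $(-u)^+$, and the $w$--equation with $(w-M)^+$ and with $(-w)^+$. The conormal boundary term vanishes by the Neumann condition, the diffusion contribution $\int_\Omega K_\omega\nabla(u-1)^+\cdot\nabla(u-1)^+$ is non--negative, and the reaction contributions are bounded below by means of the tangency inequality \eqref{eq:nagumo} and the Lipschitz continuity of $f,g$ on $S$, the coupling term $uw$ forcing one to treat the four one--sided quantities \emph{simultaneously}. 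Adding the four differential inequalities gives $\tfrac{d}{dt}\Phi(t)\le C\,\Phi(t)$ for $\Phi=\|(u-1)^+\|_{L^2(\Omega)}^2+\|(-u)^+\|_{L^2(\Omega)}^2+\|(w-M)^+\|_{L^2(\Omega)}^2+\|(-w)^+\|_{L^2(\Omega)}^2$, and since $(u_0,w_0)\in S$ yields $\Phi(0)=0$, Gronwall forces $\Phi\equiv0$. Applied to the solution of the truncated problem this shows it solves \eqref{eq:perturbed}; applied to an arbitrary weak solution in the stated class it shows the latter lies in $S$, where $f,g$ are globally Lipschitz, so the uniqueness argument of the previous paragraph applies and gives uniqueness in the full class.

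Finally, regularity. Once $0\le u\le1$ and $(1-\chi_\omega)f(u,w)\in L^\infty(Q_T)$, the first equation of \eqref{eq:perturbed} is a linear uniformly parabolic equation for $u$ in divergence form with bounded measurable coefficients $K_\omega$, homogeneous conormal data and bounded right--hand side; parabolic De Giorgi--Nash--Moser theory (see \cite{book:smoller},\cite{book:henry}), together with the smoothness of $\partial\Omega$ and the compatibility of $u_0$, gives $u\in C^{\alpha,\alpha/2}(\overline{Q_T})$ for some exponent in $(0,1)$, which we may take equal to the one in Assumption \ref{ass:1}. For $w$ I would integrate the ODE pointwise in $x$, $w(x,t)=e^{-\epsilon t}w_0(x)-\epsilon A\int_0^t e^{-\epsilon(t-s)}u(x,s)\big(u(x,s)-1-a\big)\,ds$; since $w_0\in C^2(\overline{\Omega})$ and $u\in C^{\alpha,\alpha/2}(\overline{Q_T})$, this representation shows $w$ is H\"older in $x$ uniformly in $t$ and Lipschitz in $t$, whence $w\in C^{\alpha,\alpha/2}(\overline{Q_T})$. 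I expect the invariant--region step to be the main obstacle: the textbook results via invariant sets (as in \cite{pierrecoudiere},\cite{book:smoller}) presuppose smooth coefficients, so with the jumping tensor $K_\omega$ the argument has to be carried out directly on the weak formulation, with care in controlling all four one--sided bounds through the coupling term and a single Gronwall inequality.
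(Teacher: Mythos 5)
Your overall architecture is sound, but you take a genuinely different route from the paper on the two central points. For existence, the paper does not run a Faedo--Galerkin scheme on the discontinuous problem: it approximates the indicator function $\chi_\omega$ by smooth cut-offs $\phi_k\in C^2_c(\Omega)$ with $0\le\phi_k\le1$, sets $K_k=K_0+(K_1-K_0)\phi_k$, and invokes the classical well-posedness theorem for smooth coefficients (Theorem \ref{th:1}) on each regularized problem; the confinement to $S$ is then \emph{inherited} in the limit from the a.e.\ convergence of $(u_k,w_k)$, rather than proved directly. Your plan instead establishes the invariant region at the level of the weak formulation by Stampacchia-type testing with $(u-1)^+$, $(-u)^+$, $(w-M)^+$, $(-w)^+$ and a single Gronwall inequality. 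Both routes work; yours is more self-contained and avoids the approximation of coefficients, while the paper's avoids exactly the step you identify as the main obstacle, by pushing all the invariant-region work into the smooth-coefficient case where the standard tangency/invariant-set machinery applies. The final regularity step (De Giorgi--Nash--Moser for the divergence-form equation with bounded measurable $K_\omega$, then the Duhamel formula for $w$) coincides with the paper's, which uses \cite[Ch.~3, Thm.~10.1]{book:lady}. One cosmetic point: the H\"older exponent produced by De Giorgi--Nash--Moser is dictated by the ellipticity ratio and the dimension, so you cannot simply ``take it equal to the one in Assumption \ref{ass:1}''; at best one takes the minimum of the two.

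The one place where your route is thinner than you acknowledge is uniqueness. The paper does not prove it: it cites the stability result of \cite[Thm.~1.3]{art:kunisch-wagner}, which exploits the specific cubic structure of the Aliev--Panfilov nonlinearity. Your plan derives uniqueness by first showing that \emph{every} weak solution in the stated class lies in $S$ and then applying the Lipschitz--Gronwall argument for the truncated system. But to run the invariant-region argument on an arbitrary weak solution of the \emph{untruncated} problem you must test the cubic term against truncations of $u$, and the bare regularity class $u\in L^2(0,T;H^1(\Omega))\cap L^\infty(0,T;L^2(\Omega))$ only gives $u\in L^{10/3}(Q_T)$ in dimension three, so $f(u,w)\,(u-1)^+\sim u^4$ is not obviously in $L^1(Q_T)$. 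You either need to build extra integrability into the definition of weak solution, exploit the favourable sign of $-Au^4$ together with the quadratic coupling as Kunisch--Wagner do, or restrict the uniqueness claim to solutions already known to take values in $S$. As written, this step does not close.
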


\begin{proof}
 To start with, note that  uniqueness of the weak solution of \eqref{eq:perturbed} has been shown in the case of Aliev-Panfilov model in \cite[Theorem 1.3]{art:kunisch-wagner} as a
byproduct of a stability result obtained by exploiting the specific nonlinear expression of $f$ and $g$.

We proceed now introducing a sequence of regularized problems $(P_k), \, k\in\N,$ of \eqref{eq:perturbed} and showing that the sequence
of their solutions converges to a weak solution of \eqref{eq:perturbed}. We then exploit additional properties inherited by the approximation process to conclude
the stated regularity results. The uniqueness argument is briefly sketched.
\par
Since $\chi_\omega$ is an indicator function, surely $\chi_\omega \in L^2(\Omega)$; by density arguments and according to \eqref{eq:separ},
\begin{equation}
\exists \{\phi_k\}\subset C^2_c(\Omega): \quad 0 \leq \phi_k(x) \leq 1 \quad \forall x \in \Omega, \quad \phi_k \rightarrow \chi_\omega \text{ in $L^2(\Omega)$ and a.e.}, \label{eq:ass2}
\end{equation}
being $C^2_c(\Omega)$ the space of $C^2$ functions with compact support in $\Omega$.
Define $K_k = K_0 + (K_1-K_0)\phi_k$ and let $(u_k,w_k)$ be the solution of the following problem
\begin{equation}
\left\{
\begin{aligned}
\partial_t u_k - div(K_k \nabla u_k) + (1-\phi_k)f(u_k,w_k) &= 0 \qquad &\text{in } Q_T, \\
K_0 \nabla u_k \cdot \normal &= 0 \qquad &\text{on }  \Gamma_T, \\
\partial_t w_k + g(u_k,w_k) &= 0 \qquad &\text{in }  Q_T, \\
u_k(\cdot,0) = u_0  \qquad w_k(\cdot,0) &= w_0 \qquad &\text{in }  \Omega,
\end{aligned}
\right.
\label{eq:regularized}
\end{equation}
where we have used the fact that $K_k = K_0$ on $\partial \Omega$.
We observe that, for any fixed $k$, an application of Theorem \ref{th:1} ensures the existence and uniqueness of a classical solution of Problem \eqref{eq:regularized}.
Moreover by \eqref{eq:ass2} we have that $(1-\phi_k)f$ and $g$ satisfy the Tangency condition on $S$.
Hence, from Theorem \ref{th:1} we deduce  $(u_k,w_k)\in S$, for all $k$.
\par
We now prove that from $\phi_k \xrightarrow{L^2} \chi_\omega$ the convergence of $(u_k,w_k)$ to a weak solution $(u,w)$ of \eqref{eq:perturbed} holds.
We start by proving some \textit{a priori} estimates. Consider the weak form of the problem solved by $(u_k,w_k)$ and take the classical solutions $u_k$, $w_k$
as test functions. Then, we get
	\[
	\begin{aligned}
	&\half \frac{d}{dt}\left(\norm{L^2(\Omega)}{u_k(\cdot,t)}^2 + \norm{L^2(\Omega)}{w_k(\cdot,t)}^2 \right)
      + \int_\Omega K_k\nabla u_k(\cdot,t)\cdot \nabla u_k(\cdot,t) \\
	& \quad = -\int_\Omega (1-\phi_k)f(u_k(\cdot,t),w_k(\cdot,t))u_k(\cdot,t)- \int_\Omega  g(u_k(\cdot,t),w_k(\cdot,t))w_k(\cdot,t).
	\end{aligned}
	\]
Recall now that $k_{min}$ is the minimum eigenvalue of $K_1$, whereas $k_{max}$ is the maximum eigenvalue of $K_0$.
Moreover, since $\phi_k$, $u_k$, $w_k$ are uniformly bounded independently of $k$ (indeed, $\phi_k(x)\in [0,1]$ and $(u_k,w_k)\in S$) and $f,g$ are continuous,
we can introduce
$M_f := max_{(x,t)\in Q_T} \left|(1-\phi_k)f(u_k,w_k)\right|$ and $M_g:= max_{(x,t)\in Q_T} \left|g(u_k,w_k)\right|$, which are independent of $k$.
Hence, by Young inequality,
	\begin{equation}\label{Young}
	\begin{aligned}
		&\half \frac{d}{dt}\left(\norm{L^2(\Omega)}{u_k(\cdot,t)}^2 + \norm{L^2(\Omega)}{w_k(\cdot,t)}^2 \right)
+ k_{min} \norm{L^2(\Omega)}{\nabla u_k(\cdot,t)}^2 \\
		&\leq \half \left(\norm{L^2(\Omega)}{u_k(\cdot,t)}^2 + \norm{L^2(\Omega)}{w_k(\cdot,t)}^2 \right)
+ \half |\Omega| (M^2_f + M^2_g).
	\end{aligned}
\end{equation}
 Using Gronwall's inequality, we get
\[
\begin{aligned}
	&\left(\norm{L^2(\Omega)}{u_k(\cdot,t)}^2 + \norm{L^2(\Omega)}{w_k(\cdot,t)}^2 \right) \\
	&\quad \leq \left( \norm{L^2(\Omega)}{u_0}^2 + \norm{L^2(\Omega)}{w_0}^2 +  |\Omega|(M^2_f+M^2_g)t \right)e^{t},
\end{aligned}
\]
which implies  
\[
\begin{aligned}
& \norm{L^\infty(0,T;L^2(\Omega))}{u_k}^2+ \norm{L^\infty(0,T;L^2(\Omega))}{w_k}^2 \\
&\leq \left( \norm{L^2(\Omega)}{u_0}^2 + \norm{L^2(\Omega)}{w_0}^2 +  |\Omega|(M^2_f+M^2_g)T \right)e^{T} \defeq c_1^2.
\end{aligned}
\]
Integrating from $0$ to $t$ (\ref{Young}) and using last estimate it also follows that
\[
\norm{L^2(0,T,H^1(\Omega))}{u_k}^2 \leq c_1^2T + \frac{1}{2 k_{min}}\left((|\Omega|(M^2_f+M^2_g)+c_1^2)T+\norm{L^2(\Omega)}{u_0}^2 + \norm{L^2(\Omega)}{w_0}^2\right) \eqdef c_2^2.
\]
A bound for the $H^*$ norm of $\partial_t u$ can be found by considering that, for each $\varphi \in H^1(\Omega)$,
\[
\begin{aligned}
\left|\langle \partial_t u_k(\cdot,t), \varphi \rangle_*\right| &\leq k_{max}\norm{L^2(\Omega)}{\nabla u_k(\cdot,t)}\norm{L^2(\Omega)}{\nabla \varphi}
+ M_f |\Omega|^\half \norm{L^2(\Omega)}{\varphi} \\
\end{aligned}
\]
and computing the $L^2$ norm in time
\[
\norm{L^2(0,T,H^*)}{\partial_t u_k} \leq (k_{max}c_2+M_f  |\Omega|^\half) \eqdef c_3.
\]
Analogously, one proves that $\norm{L^2(Q_T)}{\partial_t w_k}\leq c_4$, with $c_4$ independent of $k$.
\par
As a consequence of the uniform bounds we can ensure that (up to a subsequence)
$\exists u \in L^2(0,T;H^1(\Omega)) \cap L^\infty(0,T;L^2(\Omega))$, $\exists w \in L^\infty(0,T;L^2(\Omega))$, $\exists u^* \in L^2(0,T;H^*)$,
$\exists w^* \in L^2(Q_T)$ such that
\[
	u_k \xrightharpoonup{L^2(0,T,H^1)} u, \quad \partial_t u_k \xrightharpoonup{L^2(0,T,H^*)} u^*, \quad w_k \xrightharpoonup{L^2(Q_T)} w,
\quad \partial_t w_k \xrightharpoonup{L^2(Q_T)} w^*.
\]
Using the result contained in \cite[Theorem 8.1]{book:robinson} and the uniqueness of the weak solution of the perturbed problem we have that $u_k \xrightarrow{L^2(Q_T)} \uo$ (see \cite[Theorem 8.1]{book:robinson}),
hence $u_k \rightarrow \uo$ a.e. in $Q_T$. Moreover, also $w_k \rightarrow \wo$ a.e. in $Q_T$ as it can be straightforwardly obtained by the expression
\begin{equation}
w_k(x,t) = e^{-\epsilon t}w_0(x) + \epsilon A e^{-\epsilon t}\int_0^t ((1+a)u_k - u_k^2)e^{\epsilon s} ds.
\label{eq:solwk}
\end{equation}
Consider now the weak form of the problem solved by $(u_k,w_k)$: $\forall \varphi \in H^1(\Omega)$, $\psi \in L^2(\Omega)$.
\begin{equation}
\begin{aligned}
& \langle \partial_t u_k, \varphi \rangle_* + \int_\Omega K_k \nabla u_k \cdot \nabla \varphi + \int_\Omega(1-\phi_k) Au_k(u_k-a)(u_k-1) \varphi \\
& \quad + \int_\Omega (1-\phi_k)u_k w_k \varphi + \int_\Omega \partial_t w_k \psi
+ \int_\Omega A\epsilon(u_k^2 - (1+a)u_k) \psi + \int_\Omega \epsilon w_k \psi = 0.
\label{eq:regularizedweak}
\end{aligned}
\end{equation}
Taking the limit in $k$, exploiting the weak convergence of $u_k$, $w_k$, $\partial_t u_k$, $\partial_t w_k$, we obtain that
$u^* = \partial_t \uo$, $w^* = \partial_t \wo$ in $\mathcal{D}'(0,T)$ and that the limit $(\uo,\wo)$ satisfies, $\forall \varphi \in H^1(\Omega)$, $\psi \in L^2(\Omega)$,
\begin{equation}
\begin{aligned}
& \langle \partial_t \uo, \varphi \rangle_* + \int_\Omega K_\omega \nabla \uo \cdot \nabla \varphi + \int_\Omega(1-\chi_\omega) f(\uo,\wo) \varphi \\
& \quad + \int_\Omega \partial_t \wo \psi + \int_\Omega g(\uo,\wo) \psi = 0.
\label{eq:weakuw}
\end{aligned}
\end{equation}
Indeed, the convergence of the terms involving the time derivatives is a direct consequence of the weak convergence of $\partial_t u_k$, $\partial_t w_k$ and
of the definition of distributional derivative. The limit of the nonlinear reaction terms can be proved by taking advantage of the dominated convergence theorem and of
the pointwise (a.e.) convergence of $u_k$ and $\phi_k$. The convergence of the diffusion term is obtained by combining the weak convergence of
$u_k$ in $H^1(\Omega)$, the pointwise (a.e.) convergence of $\phi_k$ and the uniform $L^\infty(\Omega)$ bound on $\phi_k$. According to \eqref{eq:weakuw}, we can ensure that the limit $(\uo,\wo)$ is the weak solution of \eqref{eq:perturbed}.
\par
The weak solution $(\uo,\wo)$ is moreover a pointwise (a.e.) limit of the regularized solutions $(u_k,w_k)$. As a consequence, the uniform bound on $(u_k,w_k)$ is
valid also for the limit: $(\uo(x,t),\wo(x,t)) \in S$ a.e. in $Q_T$. This allows to prove the additional H\"older regularity of $\uo$. Indeed, since $K_\omega \in L^{\infty}(\Omega)$, $f(\uo,\wo) \in L^{\infty}(Q_T)$, we can apply
Theorem 10.1 of \cite[Chapter 3]{book:lady} on the first equation in \eqref{eq:perturbed}
$$	\partial_t \uo - div(K_\omega \nabla \uo) = -(1-\chi_\omega) f(\uo,\wo)
$$
to get $\uo \in L^{\infty}(Q_T)$. Now, we can extend the regularity result up to the boundary
due to the hypothesis on $\partial \Omega$ and on $u_0$ contained in Assumption \ref{ass:1}, and conclude $\uo \in C^{\alpha,\alpha/2}(\overline{Q_T})$.
Using  the analytic expression of $\wo$ that can be obtained by \eqref{eq:solwk} and the regularity of $\uo$, we can also deduce that $\wo \in C^{\alpha,1+\alpha/2}(\overline{Q_T})$.
\end{proof}

\par
\section{Analysis of the inverse problem}
\label{inverse}
We now tackle the inverse problem of identifying a perturbation $\omega$ from boundary measurements. Suppose to know
$u_{meas}$, the trace of the solution of \eqref{eq:perturbed} in presence of an unknown inclusion.
The inverse problem reads
\begin{equation}
\textit{find $\omega \subset \Omega$ s.t. } u_\omega|_{\partial \Omega} = u_{meas}.
\label{eq:inv}
\end{equation}
Although the analysis of the direct problem has been performed in presence of an arbitrary inclusion $\omega \subset \Omega$, for the purpose of solving
the inverse problem and  derive a reconstruction algorithm, we limit ourselves at considering the case of inclusions of small size, in analogy to what
done in \cite{art:BCMP}, \cite{art:bccmr}. In particular, we consider a family of inclusions $\omega_\varepsilon$ satisfying \eqref{eq:separ} for each $\varepsilon$ and such that 
\begin{equation}
|\omega_\varepsilon| \rightarrow 0 \text{ as } \varepsilon \rightarrow 0.
\label{eq:small}
\end{equation}

We define $\chi_\varepsilon$ the indicator function of $\omegae$, $K_\varepsilon :=K_0 - (K_0-K_1)\chie$ and $(\ue,\we)$ the solution of
problem \eqref{eq:perturbed} with $\omega = \omegae$, in the sense of Theorem \ref{th:2}.
\subsection{Energy estimates}
We now derive some \textit{energy estimates}  for the difference between $(\ue,\we)$ and $(u,w)$, the solution of the unperturbed
problem \eqref{eq:formaforte} in terms of $|\omega_\varepsilon|$ when $\varepsilon \rightarrow 0$.

From now on we will indicate by $C$ a positive constant depending on the data, independent of $\varepsilon$ and that may vary also in the same line. 
\begin{proposition}
Under Assumption 2.1 the following inequalities hold

$\displaystyle \norm{L^\infty(0,T,L^2(\Omega))}{\ue- u} + \norm{L^\infty(0,T,L^2(\Omega))}{\we- w} \leq C |\omegae|^{\half}$,
\vskip 0.2truecm
$\displaystyle  \norm{L^2(0,T,H^1(\Omega))}{\ue- u} \leq C |\omegae|^{\half}$,
\vskip 0.2truecm
$\displaystyle \norm{L^2(Q_T)}{\ue - u} \leq C |\omegae|^{\half + \eta}, \quad \text{ for some }  \eta > 0$.
\label{prop:energy}
\end{proposition}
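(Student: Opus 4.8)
The plan is to study the difference $U_\varepsilon:=\ue-u$, $W_\varepsilon:=\we-w$ of the perturbed and unperturbed solutions as the solution of a \emph{linear} parabolic system whose inhomogeneity is concentrated on $\omega_\varepsilon$. Subtracting the weak formulation \eqref{eq:weak} (with $\omega=\omega_\varepsilon$) from that of \eqref{eq:formaforte} and using the identity $K_\varepsilon\nabla\ue-K_0\nabla u=K_\varepsilon\nabla U_\varepsilon-(K_0-K_1)\chi_\varepsilon\nabla u$, one obtains that for a.e.\ $t\in(0,T)$ and all $\varphi\in H^1(\Omega)$, $\psi\in L^2(\Omega)$,
\[
\langle\partial_t U_\varepsilon,\varphi\rangle_* + \int_\Omega K_\varepsilon\nabla U_\varepsilon\cdot\nabla\varphi = \int_\Omega (K_0-K_1)\chi_\varepsilon\nabla u\cdot\nabla\varphi - \int_\Omega\!\big[(1-\chi_\varepsilon)f(\ue,\we)-f(u,w)\big]\varphi,
\]
\[
\int_\Omega\partial_t W_\varepsilon\,\psi + \int_\Omega\big[g(\ue,\we)-g(u,w)\big]\psi = 0, \qquad U_\varepsilon(\cdot,0)=0,\quad W_\varepsilon(\cdot,0)=0.
\]
By Theorem \ref{th:1} the unperturbed solution $u$ is classical, so $\nabla u$ and $f(u,w)$ are bounded on $\overline{Q_T}$; moreover $(u,w),(\ue,\we)\in S$, hence $f$ and $g$ are Lipschitz along the segment joining the two states. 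Splitting $(1-\chi_\varepsilon)f(\ue,\we)-f(u,w)=(1-\chi_\varepsilon)[f(\ue,\we)-f(u,w)]-\chi_\varepsilon f(u,w)$ separates a part Lipschitz in $(U_\varepsilon,W_\varepsilon)$ from a part supported on $\omega_\varepsilon$ and bounded in $L^\infty$.

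For the first two inequalities I would run the usual energy estimate: choosing $\varphi=U_\varepsilon(\cdot,t)$ and $\psi=W_\varepsilon(\cdot,t)$, adding the two identities and using $\int_\Omega K_\varepsilon\nabla U_\varepsilon\cdot\nabla U_\varepsilon\ge k_{min}\|\nabla U_\varepsilon(\cdot,t)\|_{L^2(\Omega)}^2$, the concentrated diffusion term is bounded by $C|\omega_\varepsilon|^{1/2}\|\nabla U_\varepsilon(\cdot,t)\|_{L^2(\Omega)}$ and absorbed by Young's inequality, the Lipschitz reaction terms give $C(\|U_\varepsilon(\cdot,t)\|_{L^2(\Omega)}^2+\|W_\varepsilon(\cdot,t)\|_{L^2(\Omega)}^2)$, and the term $\chi_\varepsilon f(u,w)$ gives at most $C|\omega_\varepsilon|+C\|U_\varepsilon(\cdot,t)\|_{L^2(\Omega)}^2$. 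Since $U_\varepsilon(\cdot,0)=W_\varepsilon(\cdot,0)=0$, Grönwall's lemma yields the $L^\infty(0,T;L^2)$ bound with a constant times $|\omega_\varepsilon|^{1/2}$, and integrating the resulting differential inequality in $t$ gives $\|U_\varepsilon\|_{L^2(0,T;H^1)}\le C|\omega_\varepsilon|^{1/2}$. (Alternatively, one can avoid testing the second equation: the explicit form \eqref{eq:solwk}, applied to $\we$ and to $w$, gives the Volterra bound $\|W_\varepsilon(\cdot,t)\|_{L^2(\Omega)}\le C\int_0^t\|U_\varepsilon(\cdot,s)\|_{L^2(\Omega)}\,ds$.)

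The third, sharper estimate requires a duality argument exploiting the concentration of the source. Given $\Phi\in L^2(Q_T)$ with $\|\Phi\|_{L^2(Q_T)}\le1$, let $z$ solve the backward adjoint problem $-\partial_t z-\mathrm{div}(K_\varepsilon\nabla z)=\Phi$ in $Q_T$, $K_\varepsilon\nabla z\cdot\normal=0$ on $\Gamma_T$, $z(\cdot,T)=0$, which satisfies $\|z\|_{L^\infty(0,T;L^2)}+\|z\|_{L^2(0,T;H^1)}\le C$. Testing the $U_\varepsilon$-equation with $z$, integrating in time and using $U_\varepsilon(\cdot,0)=0=z(\cdot,T)$ together with the symmetry of $K_\varepsilon$, one obtains
\[
\int_{Q_T}U_\varepsilon\,\Phi=\int_{Q_T}(K_0-K_1)\chi_\varepsilon\nabla u\cdot\nabla z-\int_{Q_T}(1-\chi_\varepsilon)\big[f(\ue,\we)-f(u,w)\big]z+\int_{Q_T}\chi_\varepsilon f(u,w)\,z.
\]
The last term is controlled, via Hölder's inequality and the Ladyzhenskaya embedding $z\in L^{2+4/d}(Q_T)$ (with norm $\le C\|\Phi\|_{L^2(Q_T)}$), by $C|\omega_\varepsilon|^{1/2+\eta}$ for some $\eta=\eta(d)>0$. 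For the first term one needs a gain of integrability on $\nabla z$: assuming a parabolic Meyers/Gehring-type estimate ensuring $\nabla z\in L^{2+\delta}$ on $Q_T$, with norm $\le C\|\Phi\|_{L^2(Q_T)}$ and $\delta>0$ depending only on $d$ and on the ellipticity constants $k_{min},k_{max}$ — hence uniform in $\varepsilon$ — and using $\nabla u\in L^\infty(Q_T)$, Hölder's inequality on $\omega_\varepsilon\times(0,T)$ bounds it by $C|\omega_\varepsilon|^{1/2+\eta}$ (after possibly decreasing $\eta$). The remaining (Lipschitz) reaction term is bounded by $C(\|U_\varepsilon\|_{L^2(Q_T)}+\|W_\varepsilon\|_{L^2(Q_T)})$, and by the Volterra bound above this becomes $\le C\int_0^t\|U_\varepsilon(\cdot,s)\|_{L^2(\Omega)}\,ds$ once the argument is localized to $\Omega\times(0,t)$. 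Taking the supremum over such $\Phi$ on each sub-cylinder one arrives at $\|U_\varepsilon\|_{L^2(\Omega\times(0,t))}\le C|\omega_\varepsilon|^{1/2+\eta}+C\sqrt{t}\,\|U_\varepsilon\|_{L^2(\Omega\times(0,t))}$; absorbing the last term for $t$ small and iterating over a finite partition of $[0,T]$ (the contribution of $U_\varepsilon$ at the partition nodes being estimated by the bound already proved on the previous subinterval) gives $\|U_\varepsilon\|_{L^2(Q_T)}\le C|\omega_\varepsilon|^{1/2+\eta}$.

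The step I expect to be the main obstacle is precisely the higher-integrability estimate for the adjoint state with merely bounded, discontinuous coefficients $K_\varepsilon$: one must make precise the functional framework of the parabolic Meyers-type result and, crucially, check that the exponent $\delta$ (hence $\eta$) depends only on $d$ and on the $\varepsilon$-independent ellipticity bounds $k_{min},k_{max}$, so that it does not degenerate as $\varepsilon\to0$. A secondary, more routine difficulty is closing the duality estimate in the presence of the reaction coupling without spoiling the gain; this is handled by the time-localization/iteration above (equivalently, an exponential-in-time weight) and relies on the explicit representation \eqref{eq:solwk}, which reduces the coupling with $w$ to a Volterra term in $U_\varepsilon$ alone.
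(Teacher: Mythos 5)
Your derivation of the first two estimates is essentially the paper's proof: subtract the weak formulations, test with $(U_\varepsilon,W_\varepsilon)$, use the Lipschitz bounds of $f,g$ on the invariant rectangle $S$ together with the boundedness of $\nabla u$ and $f(u,w)$, absorb the terms concentrated on $\omega_\varepsilon$ by Young, and apply Gronwall. The third estimate is also proved by duality in the paper, but your adjoint problem differs in two ways, each of which creates a genuine gap. The first is the one you flag yourself: you pose the adjoint with the discontinuous coefficient $K_\varepsilon$ and therefore need a parabolic Meyers-type bound $\|\nabla z\|_{L^{2+\delta}(Q_T)}\le C\|\Phi\|_{L^2(Q_T)}$ with $\delta$ uniform in $\varepsilon$, which you assume rather than prove. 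The paper avoids this entirely by rewriting the equation for $U_\varepsilon$ with principal part $-\mathrm{div}(K_0\nabla U_\varepsilon)$ and moving the discontinuity into the source $-\mathrm{div}((K_0-K_1)\chi_\varepsilon\nabla u_\varepsilon)$ (see \eqref{eq:mn2}); the adjoint \eqref{eq:mn3} then has the smooth coefficient $K_0$, classical $L^2$ parabolic regularity gives $\overline{U}_\varepsilon\in L^2(0,T;H^2(\Omega))\cap L^\infty(0,T;H^1(\Omega))$, and Sobolev embedding plus Nirenberg interpolation yield $\nabla\overline{U}_\varepsilon\in L^{10/3}(Q_T)$ with norm controlled by $\|U_\varepsilon\|_{L^2(Q_T)}+\|W_\varepsilon\|_{L^2(Q_T)}$ --- exactly the higher integrability you need, with no Meyers theorem.

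The second gap is more serious. Your adjoint is a scalar equation without the linearized reaction coupling, so the term $\int_{Q_T}(1-\chi_\varepsilon)[f(u_\varepsilon,w_\varepsilon)-f(u,w)]z$ survives and is only $O(\|U_\varepsilon\|_{L^2(Q_T)}+\|W_\varepsilon\|_{L^2(Q_T)})=O(|\omega_\varepsilon|^{1/2})$, which destroys the gain. The time-localization/iteration you propose does not close: restarting the duality on a subinterval $(t_1,t_2)$ produces the boundary term $\int_\Omega U_\varepsilon(\cdot,t_1)\,z(\cdot,t_1)$, and the only available control of $\|U_\varepsilon(\cdot,t_1)\|_{L^2(\Omega)}$ is the $L^\infty(0,T;L^2(\Omega))$ bound of order $|\omega_\varepsilon|^{1/2}$ (which is the natural rate for that norm); the improved rate obtained on $(0,t_1)$ is an $L^2$-in-time bound and does not control the trace at $t=t_1$. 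The paper's remedy is to linearize $f$ and $g$ via the mean value theorem and to build the resulting coefficients into a \emph{coupled} adjoint system with data $(-U_\varepsilon,-W_\varepsilon)$ (problem \eqref{eq:mn3}): in the resulting duality identity \eqref{eq:aux1} all reaction terms cancel exactly, leaving $\|U_\varepsilon\|^2_{L^2(Q_T)}+\|W_\varepsilon\|^2_{L^2(Q_T)}$ equal to the two source terms supported on $\omega_\varepsilon$, which H\"older with exponents $p\in(2,10/3]$, $q=p/(p-1)\in[10/7,2)$ bounds by $c|\omega_\varepsilon|^{1/q}(\|U_\varepsilon\|_{L^2(Q_T)}+\|W_\varepsilon\|_{L^2(Q_T)})$, giving $\eta=\tfrac1q-\tfrac12\in(0,\tfrac15]$. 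Adopting both modifications (smooth-coefficient adjoint, full linearized coupling) repairs your argument.
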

\begin{proof}
We consider \eqref{eq:weak} with $\omega=\omegae$ and the weak formulation of the unperturbed problem \eqref{eq:formaforte}.  Subtracting term by term and defining
$\Ue = \ue - u$ and $\We = \we - w$, we have that $\Ue(\cdot,0)= 0$, $\We(\cdot,0)=0$ and, for almost every $t\in (0,T)$,
\begin{equation}
	\begin{aligned}
	&\int_\Omega \partial_t \Ue \varphi + \int_\Omega K_\varepsilon \nabla \Ue \cdot \nabla \varphi + \int_\Omega (1-\chi_\varepsilon)(f(\ue,\we)-f(u,w))\varphi
+ \int_\Omega \partial_t \We \psi \\
	&+ \int_\Omega (g(\ue,\we)-g(u,w))\psi = \int_{\omegae} (K_0- K_1) \nabla u\cdot \nabla \varphi + \int_{\omegae} f(u,w) \varphi.
	\end{aligned}
\label{eq:mn}
\end{equation}
Let $\varphi = \Ue$, $\psi = \We$ then, according to Theorem \ref{th:1} and Theorem \ref{th:2}, both $(u,w)$ and $(\ue,\we)$ range within the rectangle $S$, on
which
the functions $f$ and $g$ are Lipschitz continuous with constants less or equal than $L$. Hence
\[
\begin{aligned}
& \half \frac{d}{dt}\left(\norm{L^2(\Omega)}{\Ue}^2 + \norm{L^2(\Omega)}{\We}^2\right)
+ k_{min} \norm{L^2(\Omega)}{\nabla \Ue}^2 \leq 2L\left( \norm{L^2(\Omega)}{\Ue}^2 + \norm{L^2(\Omega)}{\We}^2\right) \\
& \quad + \int_{\omegae} (K_0- K_1) \nabla u\cdot \nabla \Ue + \int_{\omegae} f(u,w) \Ue.
\end{aligned}
\]
Via an application of Schwarz and Young inequalities on the last two terms in the right-hand side of the previous inequality, and from the regularity of the
solution $(u,w)$, we can deduce
\begin{equation}
\begin{aligned}
\half \frac{d}{dt}\left(\norm{L^2(\Omega)}{\Ue}^2 + \norm{L^2(\Omega)}{\We}^2\right) &+\frac{k_{min}}{2} \norm{L^2(\Omega)}{\nabla \Ue}^2\\
&\leq C \left( \norm{L^2(\Omega)}{\Ue}^2 + \norm{L^2(\Omega)}{\We}^2\right) + C |\omegae|.
\label{eq:ener}
\end{aligned}
\end{equation}
An application of Gronwall's lemma to \eqref{eq:ener} entails that
$\norm{L^\infty(0,T;L^2(\Omega))}{\Ue}^2$, $\norm{L^\infty(0,T;L^2(\Omega))}{\We}^2$ and ultimately $\norm{L^2(0,T;H^1(\Omega))}{\Ue}^2$ can be bounded by
$C |\omegae|$. This allows to conclude the first two statements.
\par
Observe now that the pair $(\Ue,\We)$ is also the solution of
\begin{equation}
\left\{
\begin{aligned}
&\partial_t \Ue - div(K_0 \nabla \Ue) + (1-\chi_\varepsilon)(f(\ue,\we)-f(u,w)) =\\
& \qquad -div((K_0-K_1)\chi_\varepsilon \nabla \ue) + \chi_\varepsilon f(u,w) \\
&\partial_t \We + g(\ue,\we) - g(u,w) = 0.
\end{aligned}
\right.
\label{eq:mn2}
\end{equation}
By the mean value theorem, there exist two pairs of functions $(u_{\xi_1}, w_{\xi_1})$, $(u_{\xi_2}, w_{\xi_2})$ s.t.
\footnote{it follows by an application of the Lagrange's mean value theorem on the real-valued function
$h(\tau) = \int_\Omega (1-\chi_\varepsilon)(f(u+\tau(\ue - u), w+\tau(\we - w))-f(u,w))\Ue$ on the interval $[0,1]$; the same holds for $g$.}
\[
\begin{aligned}
\int_\Omega (1-\chi_\varepsilon)(f(\ue,\we)-f(u,w))\Ue &= \int_\Omega (1-\chi_\varepsilon)f_u(u_{\xi_1}, w_{\xi_1})\Ue^2
+ \int_\Omega (1-\chi_\varepsilon)f_w(u_{\xi_1}, w_{\xi_1})\Ue\We \\
\int_\Omega (g(\ue,\we)-g(u,w))\We &= \int_\Omega g_u(u_{\xi_2}, w_{\xi_2})\Ue\We + \int_\Omega g_w(u_{\xi_2}, w_{\xi_2})\We^2.
\end{aligned}
\]
By definition,  $(u_{\xi_1}, w_{\xi_1})$ and $(u_{\xi_2}, w_{\xi_2})$ are convex combinations of $(u,w)$ and $(\ue,\we)$, thus they assume values in
the rectangle $S$. Let now $(\overline{U}_\varepsilon,\overline{W}_\varepsilon)$ be the solution of the adjoint problem
\begin{equation}
\left\{
\begin{aligned}
\partial_t \overline{U}_\varepsilon + div(K_0 \nabla \overline{U}_\varepsilon)
- (1-\chi_\varepsilon)f_u(u_{\xi_1},w_{\xi_1})\overline{U}_\varepsilon - g_u(u_{\xi_2},w_{\xi_2}) \overline{W}_\varepsilon&= - \Ue \\
\partial_t \overline{W}_\varepsilon - (1-\chi_\varepsilon)f_w(u_{\xi_1},w_{\xi_1})\overline{U}_\varepsilon - g_w(u_{\xi_2},w_{\xi_2})\overline{W}_\varepsilon
&= -\We,
\end{aligned}
\right.
\label{eq:mn3}
\end{equation}
with initial conditions $\overline{U}_\varepsilon(\cdot,T)=0$, $\overline{W}_\varepsilon(\cdot,T)=0$ and homogeneous Neumann boundary condition. Consider the change of variable: $z(\cdot,t) = \overline{U}_\varepsilon(\cdot,T-t)$, $y(\cdot,t) = \overline{W}_\varepsilon(\cdot,T-t)$ and
define $\widehat{U}_\varepsilon(\cdot,t) = \Ue(\cdot,T-t)$, $\widehat{W}_\varepsilon(\cdot,t) = \We(\cdot,T-t)$,
$\hat{f}_u(\cdot,t) = f_u(u_{\xi_1}(\cdot,T-t),w_{\xi_1}(\cdot,T-t))$, and analogously for $\hat{f}_w,\hat{g}_u,\hat{g}_w$. Hence $z$ and $y$ solve
\begin{equation}
\left\{
\begin{aligned}
\partial_t z - div(K_0 \nabla z) + (1-\chi_\varepsilon)\hat{f}_u z + \hat{g}_u y&= \widehat{U}_\varepsilon\quad \text{ in }Q_T, \\
\partial_t y + (1-\chi_\varepsilon)\hat{f}_w z + \hat{g}_w y &= \widehat{W}_\varepsilon\quad\text{ in }Q_T,\\
z(0)=0,\quad y(0)&=0\quad \text{ in }\Omega,\\
K_0\nabla z\cdot{\nu}&=0\quad\text{ on }\Gamma_T.
\end{aligned}
\right.
\label{eq:mn4}
\end{equation}
Since $\widehat{U}_\varepsilon,\widehat{W}_\varepsilon \in L^2(Q_T)$ and $\hat{f}_u, \hat{f}_w,\hat{g}_u,\hat{g}_w$ are bounded in $\overline{Q}_T$, by standard Faedo-Galerkin
technique we obtain that the solution $(z,y)$ of \eqref{eq:mn4} exists and is unique with the properties $z \in L^2(0,T;H^1(\Omega)) \cap L^\infty(0,T;L^2(\Omega))$,
$\partial_t z \in L^2(0,T;H^*)$, $y \in L^\infty(0,T;L^2(\Omega))$, $\partial_t y \in L^2(Q_T)$. Moreover
\[
\begin{aligned}
\norm{L^\infty(0,T;L^2(\Omega))}{z} &+ \norm{L^2(0,T;H^1(\Omega))}{z} + \norm{L^2(0,T;H^*)}{\partial_t z} + \norm{L^\infty(0,T;L^2(\Omega))}{y} + \norm{L^2(Q_T)}{\partial_t y}
\\ &\leq C \left(\norm{L^2(Q_T)}{\widehat{U}_\varepsilon} + \norm{L^2(Q_T)}{\widehat{W}_\varepsilon}\right) = C \left(\norm{L^2(Q_T)}{\Ue} + \norm{L^2(Q_T)}{\We}\right).
\end{aligned}
\]
Additional regularity of $z$ can be proved with an analogous argument as in \cite[Chapter 4, Theorem 9.1]{book:lady}, applied on the first equation in
\eqref{eq:mn4}. Indeed, by the regularity of $K_0$ and the square integrability of $\widehat{U}_\varepsilon - \hat{g}_u y$, we can conclude that
$z \in L^2(0,T;H^2(\Omega))$, $\partial_t z \in L^2(Q_T)$ and
\[
\norm{L^2(0,T;H^2(\Omega))}{z} + \norm{L^2(Q_T)}{\partial_t z} \leq C \norm{L^2(Q_T)}{\widehat{U}_\varepsilon - \hat{g}_u y}
\leq C\left( \norm{L^2(Q_T)}{\Ue} + \norm{L^2(Q_T)}{\We}\right).
\]
Moreover, multiplying the first two equations in \eqref{eq:mn4} respectively by $\partial_t z$ and $\partial_t y$ and integrating on $\Omega$, straightforward
computations allow to conclude that
$z \in L^\infty(0,T;H^1(\Omega))$ with
\[
\norm{L^\infty(0,T;H^1(\Omega))}{z} \leq C \left(\norm{L^2(Q_T)}{\Ue} + \norm{L^2(Q_T)}{\We}\right).
\]
By Sobolev inequality we obtain
\[
\begin{aligned}
\nabla z \in (L^2(0,T;L^6(\Omega)))^d,\,\, z \in L^6(Q_T),
\end{aligned}
\]
(all the norms being bounded by $\norm{L^2(Q_T)}{\Ue} + \norm{L^2(Q_T)}{\We}$; the same bounds hold on $\overline{U}_\varepsilon$). Thus, via
Nirenberg interpolation estimates (see \cite{nirenberg}), we get
\[
\begin{aligned}
 \norm{L^{10/3}(Q_T)}{\nabla \overline{U}_\varepsilon}  \leq C (\norm{L^2(Q_T)}{\Ue} + \norm{L^2(Q_T)}{\We}).
\end{aligned}
\]
Recalling also the previous results, this allows to conclude that, taking $p \in \left(2,\frac{10}{3}\right]$,
\begin{equation}
\norm{L^p(Q_T)}{\overline{U}_\varepsilon} +  \norm{L^p(Q_T)}{\nabla \overline{U}_\varepsilon} \leq C \left( \norm{L^2(Q_T)}{\Ue} + \norm{L^2(Q_T)}{\We}\right).
\label{eq:Lp}
\end{equation}
Let us multiply the equations of \eqref{eq:mn2} respectively by  $\overline{U}_\varepsilon,\overline{W}_\varepsilon$ and the first two equations of \eqref{eq:mn4}
respectively by $\Ue,\We$. Integrating on $(0,T)$ and summing the resulting identities it is easy to see that
\begin{equation}
\int_0^T \int_\Omega (\Ue^2+ \We^2) = \int_0^T \int_{\omegae} (K_0 - K_1) \nabla \ue \cdot \nabla \overline{U}_\varepsilon
+ \int_0^T \int_{\omegae} f(u,w)\overline{U}_\varepsilon.
\label{eq:aux1}
\end{equation}
Thanks to \eqref{eq:Lp} and  H\"older inequality the first term of the right-hand side can be bounded as follows
\[
\begin{aligned}
\int_0^T \int_{\omegae} (K_0 - K_1) \nabla \ue \cdot \nabla \overline{U}_\varepsilon
&\leq C \norm{L^q(\omegae \times (0,T))}{\nabla \ue} \norm{L^p(\omegae \times (0,T))}{\nabla \overline{U}_\varepsilon} \\
&\leq C \norm{L^q(\omegae \times (0,T))}{\nabla \ue} \left( \norm{L^2(Q_T)}{\Ue} + \norm{L^2(Q_T)}{\We} \right),
\end{aligned}
\]
where $p \in \left(2,\frac{10}{3}\right]$ and $q = \frac{p}{p-1} \in \left[\frac{10}{7}, 2\right)$. In addition, again by H\"older inequality,
\[
\begin{aligned}
\norm{L^q(\omegae \times (0,T))}{\nabla \ue} &\leq \norm{L^q(\omegae \times (0,T))}{\nabla u}
+ |\omegae|^{\frac{2-q}{2q}}\norm{L^q(\Omega \times (0,T))}{\nabla \Ue} \leq c |\omegae|^\frac{1}{q}.
\end{aligned}
\]
Furthermore, it is straightforward to see that the last term in \eqref{eq:aux1} can be bounded by
$c|\omegae|^\frac{1}{q} \norm{L^2(Q_T)}{\overline{U}_\varepsilon}$.
Finally, from \eqref{eq:aux1} we conclude that, for $q \in \left[\frac{10}{7},2\right)$,
\[
\norm{L^2(Q_T)}{\Ue}^2 + \norm{L^2(Q_T)}{\We}^2 \leq c |\omegae|^{\frac{1}{q}}\left( \norm{L^2(Q_T)}{\Ue} + \norm{L^2(Q_T)}{\We}\right).
\]
Hence
\[
\norm{L^2(Q_T)}{\Ue} + \norm{L^2(Q_T)}{\We} \leq c|\omegae|^\frac{1}{q} = c|\omegae|^{\frac{1}{2}+\beta},
\]
where $\beta \in \left(0,\frac{1}{5}\right]$.
\end{proof}

\subsection{Asymptotic expansion of boundary voltage}
In this section we prove the following result.
\begin{theorem}
For every sequence $\{\omega_{\varepsilon_n}\}$ with $|\omega_{\varepsilon_n}|>0$, $|\omega_{\varepsilon_n}| \rightarrow 0$, there exist a subsequence
(still denoted by $\omega_{\varepsilon_n}$), a Radon measure $\mu$ and a matrix-valued function $\mathcal{M} \in L^2(\Omega,d\mu;\R^{d\times d})$ such that,
for every pair $(\Phi,\Psi)\in (C^1(\overline{Q}_T),C(\overline{Q}_T))$ satisfying
\begin{equation}
		\left\{
		\begin{aligned}
			\partial_t \Phi + div(K_0 \nabla \Phi) - f_u(u,w) \Phi - g_u(u,w)\Psi &= 0 \\
			\partial_t \Psi - f_w(u,w)\Phi - g_w(u,w)\Psi &= 0 \\			
		\end{aligned}
		\right.
	\label{eq:PhiPsi}
	\end{equation}
	with the final conditions $\Phi(\cdot,T)=0$, $\Psi(\cdot,T)=0$, it holds
	\begin{equation}
	\int_0^T\!\!\!\!\int_{\partial \Omega} \!\!\!\!K_0 \nabla \Phi \cdot \normal (\uen - u)d\sigma
= |\omegaen| \int_0^T\!\!\!\!\int_\Omega \!\!\left[ \mathcal{M}(K_0-K_1)\nabla u \cdot \nabla \!\Phi + f(u,w)\Phi \right]d\mu + o(|\omegaen|).
	\label{eq:expansion}
	\end{equation}
\label{th:3}
\end{theorem}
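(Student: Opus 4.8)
The plan is to use the adjoint pair $(\Phi,\Psi)$ of \eqref{eq:PhiPsi} as test functions in the weak system solved by the difference $(\Uen,\Wen):=(\uen-u,\wen-w)$, integrate over $Q_T$, integrate by parts in time and in space, and invoke \eqref{eq:PhiPsi} to cancel the interior terms, so that only the boundary integral of \eqref{eq:expansion} survives on the left and an $\omegaen$-supported source on the right; one then divides by $|\omegaen|$ and passes to the limit by a Capdeboscq--Vogelius-type argument. Concretely, I would take $\omega=\omegaen$ in \eqref{eq:mn} (equivalently work from \eqref{eq:mn2}), choose $\varphi=\Phi(\cdot,t)$, $\psi=\Psi(\cdot,t)$ and integrate in time. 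Since by \eqref{eq:separ} the inclusion stays at fixed positive distance from $\partial\Omega$, $\Ken=K_0$ near $\partial\Omega$ and hence $K_0\nabla\Uen\cdot\normal=0$ on $\Gamma_T$; Green's formula applied to $\int_\Omega K_0\nabla\Uen\cdot\nabla\Phi$ then produces exactly $\int_0^T\!\!\int_{\partial\Omega}(K_0\nabla\Phi\cdot\normal)(\uen-u)\,d\sigma$ and no other boundary term, while, writing $\Ken\nabla\Uen=K_0\nabla\Uen-(K_0-K_1)\chi_{\omegaen}\nabla\Uen$, the $\omegaen$-localised part $-\int_{\omegaen}(K_0-K_1)\nabla\Uen\cdot\nabla\Phi$ combines with the source $\int_{\omegaen}(K_0-K_1)\nabla u\cdot\nabla\Phi$ to give $\int_{\omegaen}(K_0-K_1)\nabla\uen\cdot\nabla\Phi$. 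Integration by parts in time uses $\Uen(\cdot,0)=\Wen(\cdot,0)=0$ and $\Phi(\cdot,T)=\Psi(\cdot,T)=0$. Linearising $f(\uen,\wen)-f(u,w)$ and $g(\uen,\wen)-g(u,w)$ by the mean value theorem at intermediate points $(u_{\xi_i},w_{\xi_i})\in S$ as in the proof of Proposition~\ref{prop:energy}, and using \eqref{eq:PhiPsi}, whose coefficients are evaluated at $(u,w)$ and carry no factor $1-\chi_{\varepsilon_n}$, the interior reaction terms cancel up to two remainders: $\mathcal{E}_1$, gathering the products $(f_u(u_{\xi_1},w_{\xi_1})-f_u(u,w))\Uen\Phi$ and their analogues with $f_w,g_u,g_w$, and $\mathcal{E}_2=\int_0^T\!\!\int_{\omegaen}\big(f_u(u_{\xi_1},w_{\xi_1})\Uen+f_w(u_{\xi_1},w_{\xi_1})\Wen\big)\Phi$.

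Next, both remainders are $o(|\omegaen|)$. Since $f_u,f_w,g_u,g_w$ are polynomials, they are Lipschitz on the bounded set $S$; as $(u_{\xi_i},w_{\xi_i})$ are convex combinations of $(u,w)$ and $(\uen,\wen)$ we have $|u_{\xi_i}-u|\le|\Uen|$, $|w_{\xi_i}-w|\le|\Wen|$, so, using the boundedness of $\Phi,\Psi$ on $\overline{Q_T}$, $|\mathcal{E}_1|\le C\big(\norm{L^2(Q_T)}{\Uen}^2+\norm{L^2(Q_T)}{\Wen}^2\big)$, while $|\mathcal{E}_2|\le C|\omegaen|^{1/2}\big(\norm{L^2(Q_T)}{\Uen}+\norm{L^2(Q_T)}{\Wen}\big)$ by H\"older's inequality. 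By Proposition~\ref{prop:energy} (whose proof gives $\norm{L^2(Q_T)}{\Uen}+\norm{L^2(Q_T)}{\Wen}\le C|\omegaen|^{1/2+\beta}$ for some $\beta>0$), $|\mathcal{E}_1|=o(|\omegaen|)$ and $|\mathcal{E}_2|=o(|\omegaen|)$. Thus
\[
\int_0^T\!\!\int_{\partial\Omega}(K_0\nabla\Phi\cdot\normal)(\uen-u)\,d\sigma=\int_0^T\!\!\int_{\omegaen}(K_0-K_1)\nabla\uen\cdot\nabla\Phi+\int_0^T\!\!\int_{\omegaen}f(u,w)\Phi+o(|\omegaen|).
\]

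It remains to normalise by $|\omegaen|$ and let $n\to\infty$. Introduce the probability measures $\mu_n:=|\omegaen|^{-1}\chi_{\omegaen}\,dx$ on $\overline\Omega$; they are supported in the fixed compact $\mathcal{C}_0$, hence weakly-$*$ relatively compact, and along a subsequence $\mu_n\rightharpoonup^*\mu$. For the reaction term, $|\omegaen|^{-1}\int_0^T\!\!\int_{\omegaen}f(u,w)\Phi=\int_0^T\langle\mu_n,f(u,w)(\cdot,t)\Phi(\cdot,t)\rangle\,dt\to\int_0^T\!\!\int_\Omega f(u,w)\Phi\,d\mu\,dt$, by continuity of $f(u,w)\Phi$ on $\overline{Q_T}$ and dominated convergence in $t$. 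For the flux term, Proposition~\ref{prop:energy} yields $|\omegaen|^{-1}\int_0^T\!\!\int_{\omegaen}|\nabla\uen|^2\le C$ (split $\nabla\uen=\nabla u+\nabla\Uen$ and use $\nabla u\in L^\infty(Q_T)$ and $\norm{L^2(0,T;H^1(\Omega))}{\Uen}\le C|\omegaen|^{1/2}$), so the $\R^d$-valued measures $|\omegaen|^{-1}\chi_{\omegaen}(K_0-K_1)\nabla\uen\,dx\,dt$ have uniformly bounded total variation; along a further subsequence their weak-$*$ limit exists, and a Cauchy--Schwarz estimate with respect to $\mu_n$ (in space) and in $t$, combined with the last bound, shows it is absolutely continuous with respect to $\mu\otimes\mathcal{L}^1$ on $\Omega\times(0,T)$, with an $L^2$ density. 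To exhibit this density as $\mathcal{M}(x)(K_0-K_1)(x)\nabla u(x,t)$ for a single matrix field $\mathcal{M}\in L^2(\Omega,d\mu;\R^{d\times d})$, independent of $t$ and of $\Phi$, I would adapt the Capdeboscq--Vogelius representation as in \cite{art:bccmr}: for $j=1,\dots,d$ introduce unperturbed solutions $\vj$ of the linearised problem with gradients spanning $\R^d$ and their perturbed counterparts $\venj$, establish for each the estimates of Proposition~\ref{prop:energy}, extract the limit measures $\nu_j$, and use interior regularity for parabolic systems to prove that near the shrinking inclusions the corrector depends \emph{linearly and locally} on the ambient gradient, whence $d\nu_j=\mathcal{M}(K_0-K_1)\nabla\vj\,d\mu\,dt$ for one and the same $\mathcal{M}$. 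A density argument then transfers the representation from the probing fields $\nabla\vj$ to the integrand $(K_0-K_1)\nabla\uen\cdot\nabla\Phi$, and \eqref{eq:expansion} follows.

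The main obstacle is this last step: constructing $\mathcal{M}$, proving its $L^2(d\mu)$-regularity and, above all, showing that the limiting concentration of the perturbed current is a local, linear function of the ambient field $\nabla u(x,t)$, so that $\mathcal{M}$ depends only on the local geometry of the inclusions and on $K_0,K_1$, not on time or on the global solution. This forces one to combine the (elliptic) Capdeboscq--Vogelius machinery with parabolic interior estimates and to control the nonlinear $(u,w)$-dependence of the linearised coefficients; by contrast, the first two steps are routine once the energy estimates of Proposition~\ref{prop:energy} are available.
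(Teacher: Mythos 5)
Your first two steps coincide with the paper's own argument: testing the difference system with the adjoint pair $(\Phi,\Psi)$, integrating by parts in space and time, linearising $f$ and $g$ at intermediate points in $S$, and killing the two remainders with the estimate $\norm{L^2(Q_T)}{\Uen}+\norm{L^2(Q_T)}{\Wen}\leq C|\omegaen|^{1/2+\beta}$ from Proposition \ref{prop:energy} is exactly how the paper reaches the intermediate identity
\[
\int_0^T\!\!\int_{\partial\Omega}K_0\nabla\Phi\cdot\normal\,\Uen\, d\sigma=\int_0^T\!\!\int_{\omegaen}\left[(K_0-K_1)\nabla\uen\cdot\nabla\Phi+f(u,w)\Phi\right]+o(|\omegaen|).
\]
Your extraction of the limit measure $\mu$ and of a uniformly bounded family of vector-valued measures from the normalised flux is also as in the paper.

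The gap sits precisely in the step you flag as the main obstacle: identifying the limit density as $\mathcal{M}(K_0-K_1)\nabla u$ with a single, time-independent $\mathcal{M}\in L^2(\Omega,d\mu;\R^{d\times d})$. The paper does not obtain this from interior parabolic regularity or from any statement that ``the corrector depends linearly and locally on the ambient gradient''; no such locality argument appears. Instead, $\mathcal{M}$ is defined purely from the \emph{elliptic} correctors \eqref{eq:vjvenj} --- $\vj=x_j-\int_{\partial\Omega}x_j$ and its perturbed counterpart $\venj$ solve static conductivity problems, not linearised parabolic ones --- through the weak* limit $\frac{\chi_\omegaen}{|\omegaen|}\partial_i\venj\,dx\to\mathcal{M}_{ij}\,d\mu$, exactly as in Capdeboscq--Vogelius. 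The bridge between this elliptic object and the parabolic quantity $\frac{\chi_\omegaen}{|\omegaen|}(K_0-K_1)\nabla\uen\cdot\nabla\Phi$ is the exchange identity of Lemma \ref{lem:asym},
\[
\int_0^T\!\!\int_\Omega\frac{\chi_\omegaen}{|\omegaen|}(K_0-K_1)\nabla u\cdot\nabla\venj\,\phi=\int_0^T\!\!\int_\Omega\frac{\chi_\omegaen}{|\omegaen|}(K_0-K_1)\nabla\uen\cdot\nabla\vj\,\phi+o(1),
\]
which is proved variationally: one cross-tests the elliptic equations for $\vj,\venj$ with $\Uen\phi$ and the parabolic equations for $u,\uen$ with $\venj\phi$ and $\vj\phi$, and cancels everything up to $o(|\omegaen|)$ using Proposition \ref{prop:energy} together with the elliptic estimates \eqref{eq:energyv}. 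Since $\nabla\vj=e_j$, the right-hand side is a component of your unknown limit measure, while the left-hand side converges by \eqref{eq:Mut} to $\mathcal{M}_{ji}(K_0-K_1)_{ik}\frac{\partial u}{\partial x_k}\,d\mu\, dt$; this is what forces the density to be $\mathcal{M}(K_0-K_1)\nabla u$ with $\mathcal{M}$ independent of $t$, of $\Phi$ and of the nonlinearity. As written, your candidate representation $d\nu_j=\mathcal{M}(K_0-K_1)\nabla\vj\,d\mu\,dt$ carries $\nabla\vj$ where $\nabla u$ must appear and does not match the scalar nature of $\nu_j$. Without this exchange lemma (or an equivalent substitute), the passage from the bounded family of measures to the representation \eqref{eq:expansion} is not established.
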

In order to prove Theorem \ref{th:3}, we need to introduce the auxiliary functions $\vj$, $\venj$ solving
\begin{equation}
\left\{ \begin{aligned}
 div(K_0 \nabla \vj) &= F^{(j)} \quad &\textit{ in } \Omega ,\\
K_0\nabla \vj \cdot \normal &= f^{(j)}\quad &\textit{ on } \partial \Omega, \\
\int_{\partial \Omega} \vj &= 0, &
\end{aligned}\right.
\qquad \quad
\left\{ \begin{aligned}
  div(\Ken \nabla \venj) &= F^{(j)} \quad &\textit{ in } \Omega, \\
K_0\nabla \venj \cdot \normal &= f^{(j)} \quad &\textit{ on } \partial \Omega, \\
\int_{\partial \Omega} \venj &= 0, &
\end{aligned}\right.
\label{eq:vjvenj}
\end{equation}
with $F^{(j)} = \sum_{i} \frac{\partial[K_0]_{ij}}{\partial x_i}$ and $f^{(j)} = \sum_{i} [K_0]_{ij}\normal_i$. By the choice of $F^{(j)}$ and $f^{(j)}$ it holds $\vj = x_j - \int_{\partial \Omega} x_j$.
Moreover, $\vj$ and $\venj$ satisfy the following energy estimates (see \cite{art:capdebosq})
\begin{equation}
\norm{H^1(\Omega)}{\venj - \vj} \leq c |\omegaen|^\half, \qquad \norm{L^2(\Omega)}{\venj - \vj} \leq c |\omegaen|^{\half + \gamma}, \,\, \gamma >0.
\label{eq:energyv}
\end{equation}
We need also a preliminary lemma.
\begin{lemma}
For each $\phi \in C^1(\overline{Q}_T)$ such that $\phi(x,T)=0$, we have, as $|\omegaen| \rightarrow 0$,
\begin{equation}
\int_0^T \int_\Omega \frac{\chi_\omegaen}{|\omegaen|}(K_0-K_1)\nabla u \cdot \nabla \venj \phi
= \int_0^T \int_\Omega \frac{\chi_\omegaen}{|\omegaen|}(K_0 - K_1)\nabla \uen \cdot \nabla \vj \phi + o(1).
\label{eq:lemma4}
\end{equation}
\label{lem:asym}
\end{lemma}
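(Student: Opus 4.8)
\emph{Proof idea.} The plan is to multiply the discrepancy between the two sides of \eqref{eq:lemma4} by $|\omegaen|$ and rewrite it as a finite sum of remainder terms, each of which is $o(|\omegaen|)$. Write $\rho_n \defeq \venj - \vj$ and $\Uen \defeq \uen - u$. Two weak identities are the starting point. Subtracting the weak formulations of the two elliptic problems in \eqref{eq:vjvenj}, and using that $\chi_{\omegaen}$ vanishes near $\partial\Omega$ (so that $\Ken = K_0$ there and the Neumann data coincide), one obtains
\begin{equation*}
\int_\Omega \Ken \nabla \rho_n \cdot \nabla \theta = \int_\Omega \chi_{\omegaen}(K_0-K_1)\nabla \vj \cdot \nabla \theta \qquad \forall\, \theta \in H^1(\Omega).
\end{equation*}
Moreover, taking $\psi=0$ in \eqref{eq:mn} (with $\omega=\omegaen$) yields, for a.e. $t\in(0,T)$ and every $\varphi\in H^1(\Omega)$,
\begin{equation*}
\langle \partial_t \Uen, \varphi\rangle_* + \int_\Omega \Ken \nabla \Uen \cdot \nabla \varphi + \int_\Omega (1-\chi_{\omegaen})(f(\uen,\wen)-f(u,w))\varphi = \int_{\omegaen}(K_0-K_1)\nabla u \cdot \nabla \varphi + \int_{\omegaen} f(u,w)\varphi.
\end{equation*}

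The core step is a reciprocity (cross--energy) computation: test the elliptic identity with $\theta = \Uen(\cdot,t)\,\phi(\cdot,t)$ and the parabolic identity with $\varphi = \rho_n(\cdot)\,\phi(\cdot,t)$, and integrate both over $(0,T)$. Expanding $\nabla(\Uen\phi)$ and $\nabla(\rho_n\phi)$ by the Leibniz rule and using the symmetry of $\Ken$ and of $K_0-K_1$, the term $\int_0^T\int_\Omega \phi\, \Ken\nabla\Uen\cdot\nabla\rho_n$ occurs in both identities and cancels when they are subtracted; what survives is an identity of the shape $B_n - C_n = \sum_{i=1}^{7} \pm P_i$, where $B_n \defeq \int_0^T\int_{\omegaen}(K_0-K_1)\nabla u\cdot\nabla\rho_n\,\phi$ and $C_n \defeq \int_0^T\int_{\omegaen}(K_0-K_1)\nabla\Uen\cdot\nabla\vj\,\phi$ (so that $|\omegaen|^{-1}(B_n-C_n)$ is precisely the difference in \eqref{eq:lemma4}), and the $P_i$ are the ``lower order'' leftovers: the parabolic term $\int_0^T\int_\Omega \partial_t\Uen\,\rho_n\,\phi$; the reaction term $\int_0^T\int_\Omega (1-\chi_{\omegaen})(f(\uen,\wen)-f(u,w))\rho_n\,\phi$; the source terms $\int_0^T\int_{\omegaen}(K_0-K_1)\nabla u\cdot\nabla\phi\,\rho_n$ and $\int_0^T\int_{\omegaen} f(u,w)\,\rho_n\,\phi$; and the three terms carrying a factor $\nabla\phi$, namely $\int_0^T\int_\Omega \Uen\,\Ken\nabla\rho_n\cdot\nabla\phi$, $\int_0^T\int_\Omega \rho_n\,\Ken\nabla\Uen\cdot\nabla\phi$, and $\int_0^T\int_{\omegaen}\Uen\,(K_0-K_1)\nabla\vj\cdot\nabla\phi$.

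It then remains to show $P_i = o(|\omegaen|)$ for each $i$. For the parabolic term, since $\rho_n$ does not depend on $t$, I would integrate by parts in time: using $\phi(\cdot,T)=0$ (this is where the condition $\phi(x,T)=0$ is used) and $\Uen(\cdot,0)=0$, it reduces to $-\int_0^T\int_\Omega \Uen\,\rho_n\,\partial_t\phi$. Then, for every $P_i$, a Cauchy--Schwarz/H\"older estimate combined with the uniform $L^\infty$ bounds on $K_0-K_1$, $\Ken$, $\nabla\vj$, $\nabla u$, $f(u,w)$, $\phi$ and $\nabla\phi$, with $\norm{L^2(\Omega)}{\chi_{\omegaen}}=|\omegaen|^{1/2}$, with the energy estimates of Proposition \ref{prop:energy} ($\norm{L^2(Q_T)}{\Uen}\le C|\omegaen|^{1/2+\eta}$, $\norm{L^2(0,T;H^1(\Omega))}{\Uen}\le C|\omegaen|^{1/2}$, $\norm{L^\infty(0,T;L^2(\Omega))}{\Uen}+\norm{L^\infty(0,T;L^2(\Omega))}{\Wen}\le C|\omegaen|^{1/2}$) and with \eqref{eq:energyv} ($\norm{L^2(\Omega)}{\nabla\rho_n}\le c|\omegaen|^{1/2}$, $\norm{L^2(\Omega)}{\rho_n}\le c|\omegaen|^{1/2+\gamma}$), shows that $P_i = O(|\omegaen|^{1+\delta})$ for some $\delta>0$ depending only on $\eta$ and $\gamma$. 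The mechanism behind this gain is uniform: compared with the natural size $|\omegaen|$ of $B_n$ and $C_n$, each $P_i$ has traded one gradient of $\rho_n$ for $\rho_n$ itself, or paired $\rho_n$ (not $\nabla\rho_n$) against $\nabla\Uen$, or carries an extra factor supported on $\omegaen$, thereby picking up a power $|\omegaen|^{\gamma}$ or $|\omegaen|^{\eta}$. Dividing by $|\omegaen|$ and letting $n\to\infty$ gives \eqref{eq:lemma4}.

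The step I expect to be delicate is the reciprocity bookkeeping in the middle paragraph: checking that, after the two Leibniz expansions, the leading $\int_0^T\int_\Omega \phi\,\Ken\nabla\Uen\cdot\nabla\rho_n$ terms genuinely coincide (via symmetry of $\Ken$) and cancel, and collecting the seven remainders with the correct signs; once this algebraic identity is in place, the termwise estimates are routine.
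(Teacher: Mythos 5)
Your proposal is correct and follows essentially the same reciprocity argument as the paper: the paper likewise cross-tests the elliptic identity for $\vj,\venj$ against $\Uen\phi$ and the parabolic identity for $\Uen$ against $\venj\phi$ and $\vj\phi$, cancels the common term $\int_0^T\int_\Omega \phi\, \Ken\nabla\Uen\cdot\nabla(\venj-\vj)$ by symmetry, and disposes of the remainders via the time integration by parts (using $\phi(\cdot,T)=0$, $\Uen(\cdot,0)=0$) together with Proposition \ref{prop:energy} and \eqref{eq:energyv}. The only difference is organizational --- you work directly with $\rho_n=\venj-\vj$ and a single parabolic identity, whereas the paper keeps the two auxiliary functions separate and subtracts two versions of the parabolic weak form --- and your termwise estimates, including the places where the improved exponents $\eta$ and $\gamma$ are needed, match the paper's.
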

\begin{proof}
Since $\vj$ and $\venj$ are the solutions of problems \eqref{eq:vjvenj}, we obtain
\[
\int_\Omega \Ken \nabla \venj \cdot \nabla \varphi = \int_\Omega K_0 \nabla \vj \cdot \nabla \varphi \qquad \forall \varphi \in H^1(\Omega).
\]
Take $\varphi = \Uen \phi$, being $\Uen = \uen - u$. By computation we get
\[
\int_\Omega \Ken \nabla \venj \cdot \nabla \Uen \phi + \int_\Omega \Ken \nabla \venj \cdot \nabla \phi \Uen =
\int_\Omega K_0 \nabla \vj \cdot \nabla \Uen\phi + \int_\Omega K_0 \nabla \vj \cdot \nabla \phi \Uen
\]
that we can rewrite as
\begin{equation}
\begin{aligned}
&\int_\Omega \Ken \nabla (\venj\phi) \cdot \nabla \Uen - \int_\Omega \Ken \venj \nabla \phi \cdot \nabla \Uen
+ \int_\Omega \Ken \nabla \venj \cdot \nabla \phi \Uen \\
&\quad = \int_\Omega K_0 \nabla (\vj\phi) \cdot \nabla \Uen - \int_\Omega K_0 \vj \nabla \phi \cdot \nabla \Uen
+ \int_\Omega K_0\nabla \vj \cdot \nabla \phi \Uen.
\end{aligned}
\label{eq:bob}
\end{equation}
Proposition \ref{prop:energy} and \eqref{eq:energyv} lead to
\[
\begin{aligned}
&\int_0^T\int_\Omega \Ken \venj \nabla \phi \cdot \nabla \Uen - \int_0^T\int_\Omega K_0 \vj \nabla \phi \cdot \nabla \Uen = \int_0^T\int_\Omega (\Ken - K_0) \nabla \uen \cdot \nabla \phi \vj\\
& \quad - \int_0^T\int_\Omega (\Ken - K_0) \nabla u \cdot \nabla \phi \venj + o(|\omegaen|),
\end{aligned}
\]
whereas, 
\[
\begin{aligned}
&\int_0^T\int_\Omega \Ken \nabla \venj \cdot \nabla \phi \Uen - \int_0^T\int_\Omega K_0\nabla \vj \cdot \nabla \phi \Uen \\
&\quad = \int_0^T\int_\Omega (\Ken - K_0) \nabla \venj \cdot \nabla \phi \Uen + \int_0^T\int_\Omega K_0 \nabla (\venj - \vj) \cdot \nabla \phi \Uen = o(|\omegaen|).
\end{aligned}
\]
Collecting the previous two relations in \eqref{eq:bob}, we conclude
\begin{equation}
\begin{aligned}
& \int_0^T\int_\Omega \Ken \nabla (\venj\phi) \cdot \nabla \Uen - \int_0^T\int_\Omega K_0 \nabla (\vj\phi) \cdot \nabla \Uen \\
& \quad = \int_0^T\int_\Omega (\Ken - K_0) \nabla \uen \cdot \nabla \phi \vj - \int_0^T\int_\Omega (\Ken - K_0) \nabla u \cdot \nabla \phi \venj + o(|\omegaen|).
\end{aligned}
\label{eq:vpart}
\end{equation}
It can be easily verified that, for every $\varphi \in H^1(\Omega)$, the following identities hold 
\[
\begin{aligned}
&\int_\Omega \partial_t \Uen \varphi + \int_\Omega \Ken \nabla \Uen \cdot \nabla \varphi + \int_\Omega (1-\chi_{\omegaen})(f(\uen,\wen)-f(u,w))\varphi \\
& \quad = \int_\Omega (K_0 - \Ken)\nabla u \cdot \nabla \varphi + \int_\Omega \chi_{\omegaen}f(u,w) \varphi ,\\
&\int_\Omega \partial_t \Uen \varphi + \int_\Omega K_0 \nabla \Uen \cdot \nabla \varphi + \int_\Omega (1-\chi_{\omegaen})(f(\uen,\wen)-f(u,w))\varphi \\
& \quad = \int_\Omega (K_0 - \Ken)\nabla \uen \cdot \nabla \varphi + \int_\Omega \chi_{\omegaen}f(u,w) \varphi. \\
\end{aligned}
\]
Taking $\varphi = \venj \phi$ in the first identity and $\varphi = \vj \phi$ in the second one, integrating in time and subtracting we obtain
\begin{equation}
\begin{aligned}
&\int_0^T \!\!\int_\Omega \partial_t \Uen(\venj-\vj)\phi + \int_0^T\!\!\int_\Omega \Ken\nabla\Uen\cdot\nabla(\venj\phi) - \int_0^T\!\!\int_\Omega K_0\nabla\Uen\cdot\nabla(\vj\phi)\\
& + \!\! \int_0^T\!\!\!\!\int_\Omega (1-\!\chi_{\omegaen})(f(\uen,\wen)-\!f(u,w))(\venj-\vj)\phi = \!\! \int_0^T \!\!\!\!\int_\Omega \chi_{\omegaen}f(u,w) (\venj-\!\vj)\phi \\
& +\int_0^T\!\!\int_\Omega (K_0 - \Ken)\nabla u \cdot \nabla (\venj\phi) - \int_0^T\!\!\int_\Omega (K_0 - \Ken)\nabla \uen \cdot \nabla (\vj\phi).
\end{aligned}
\label{eq:aux43}
\end{equation}
Via integration by parts, and taking advantage of the homogeneous initial and final conditions  satisfied respectively by $U_{\varepsilon_n}$ and $\phi$ it follows
\[
\begin{aligned}
&\left | \int_0^T\int_\Omega \partial_t \Uen(\venj-\vj)\phi \right | \leq \norm{C^1(Q_T)}{\phi}\norm{L^2(Q_T)}{\Uen}\norm{L^2(\Omega)}{\venj-\vj} = o(|\omegaen|).
\end{aligned}
\]
Analogous bounds can be proved for the last term in the left-hand side and the first term in the right-hand side of \eqref{eq:aux43},
thanks to the energy estimates of $\venj - \vj, \, \uen - u,\, \wen - w$ and the regularity of $f$. As a consequence, it holds that
\begin{equation}
\begin{aligned}
& \int_0^T\int_\Omega \Ken\nabla\Uen\cdot\nabla(\venj\phi) - \int_0^T\int_\Omega K_0\nabla\Uen\cdot\nabla(\vj\phi) \\
& = \quad \int_0^T\int_\Omega (K_0 - \Ken)\nabla u \cdot \nabla (\venj\phi) - \int_0^T\int_\Omega (K_0 - \Ken)\nabla \uen \cdot \nabla (\vj\phi) + o(|\omegaen|).
\end{aligned}
\label{eq:upart}
\end{equation}
In conclusion, by a combination of \eqref{eq:vpart} and \eqref{eq:upart},
\[
\begin{aligned}
&\int_0^T\int_\Omega (K_0 - \Ken)\nabla u \cdot \nabla (\venj\phi) - \int_0^T\int_\Omega (K_0 - \Ken)\nabla \uen \cdot \nabla (\vj\phi) \\
& \quad = \int_0^T\int_\Omega (\Ken - K_0) \nabla \uen \cdot \nabla \phi \vj - \int_0^T\int_\Omega (\Ken - K_0) \nabla u \cdot \nabla \phi \venj + o(|\omegaen|),
\end{aligned}
\]
which immediately entails the thesis.
\end{proof}

We are finally ready to prove the main result of this section.
\begin{proof}[Proof of Theorem \ref{th:3}]

Arguing as in \cite{art:capdebosq}, we deduce that there exist a Radon measure $\mu \in C(\overline{\Omega})^*$, a symmetric matrix $\mathcal{M} \in L^2(\Omega,d\mu;\R^{d\times d})$ and a sequence $\{\omegaen\}$ such that
\[
\frac{\chi_\omegaen}{|\omegaen|} \dx \rightarrow \dmu
\quad \text{and} \quad
\frac{\chi_\omegaen}{|\omegaen|}\frac{\partial \venj}{\partial x_i}\dx \rightarrow \mathcal{M}_{ij}\dmu
\]
in the weak* topology of $C(\overline{\Omega})$. Moreover, thanks to the regularity estimates for $u$ and the symmetries of the matrices $K_0$, $K_1$ and $\mathcal{M}$, we have, for every $j=1,\ldots,d$,
\begin{equation}
\frac{\chi_\omegaen}{|\omegaen|} (K_0-K_1)_{ik}\frac{\partial u}{\partial x_k}  \frac{\partial \venj}{\partial x_i}\dx \rightarrow  \mathcal{M}_{ji}(K_0-K_1)_{ik}\frac{\partial u}{\partial x_k}\dmu, \qquad \forall t \in (0,T),
\label{eq:Mu}
\end{equation}
which also implies  
\begin{equation}
\frac{\chi_\omegaen}{|\omegaen|} (K_0-K_1)_{ik}\frac{\partial u}{\partial x_k}  \frac{\partial \venj}{\partial x_i}\dx \dt \rightarrow  \mathcal{M}_{ji}(K_0-K_1)_{ik}\frac{\partial u}{\partial x_k}\dmu dt
\label{eq:Mut}
\end{equation}
in the weak$^*$ topology of $C(\overline{Q}_T)$.

On account of the energy estimates \eqref{eq:energyv}, straightforward computations show that for every $i,j=1,\ldots,d$ there exists a constant $C$, depending on the data and on $T$ but independent of $\varepsilon_n$, such that
\[ 
\left|\int_0^T \int_\Omega \frac{\chi_\omegaen}{|\omegaen|} (K_0 - K_1)_{ik} \frac{\partial \uen}{\partial x_k} \frac{\partial \vj}{\partial x_i} \dx \dt \right |\leq C.
\]
In particular, we define the limit measure $\nu_j$ in the weak$^*$ topology of $C(\overline{Q}_T)$) as follows
\begin{equation}
\frac{\chi_\omegaen}{|\omegaen|}\left[ (K_0 - K_1) \nabla \uen \right]_i \frac{\partial \vj}{\partial x_i} \dx \dt \rightarrow \dnuj 
\label{eq:nuj}
\end{equation}
Exploiting \eqref{eq:nuj} and \eqref{eq:Mut}, and recalling Lemma 3.3, we deduce
\begin{equation}
\int_0^T\int_{\Omega}\phi\dnuj = \int_0^T\int_{\Omega} \mathcal{M}_{ji}(K_0-K_1)_{ik}\frac{\partial u}{\partial x_k}\phi \dmu \qquad \forall \phi\in C^1(\overline{Q}_T) 
\label{eq:nuj2}
\end{equation}
and by the density of $C^1(\overline{Q}_T)$ in $C^0(\overline{Q}_T)$ we derive
\[
\dnuj=\mathcal{M}_{ji}(K_0-K_1)_{ik}\frac{\partial u}{\partial x_k} \dmu.
\]
Now, setting $\Uen = \uen-u$ and $\Wen = \wen-w$, and selecting $\Phi$ and $\Psi$ as test functions, we have
	\[
	\begin{aligned}
	&\int_\Omega \partial_t \Uen \Phi + \int_\Omega K_0 \nabla \Uen\cdot \nabla \Phi + \int_\Omega (1 - \chi_\omegaen)(f(\ue,\we)-f(u,w))\Phi
+ \int_\Omega \partial_t \Wen \Psi \\&+ \int_\Omega (g(\ue,\we)-g(u,w))\Psi = \int_\omegaen (K_0 - K_1)\nabla \uen \cdot \nabla \Phi
+ \int_\omegaen f(u,w)\Phi.
	\end{aligned}
	\]
Furthermore, since $(\Phi,\Psi)$ is a solution to \eqref{eq:PhiPsi}, it also holds:
\[
\begin{aligned}
 &\int_\Omega \partial_t \Phi \Uen - \int_\Omega K_0 \nabla \Phi \cdot \nabla \Uen - \int_\Omega (f_u(u,w)\Phi + g_u(u,w)\Psi)\Uen
 + \int_\Omega \partial_t \Psi\Wen \\&- \int_\Omega (f_w(u,w)\Phi + g_w(u,w)\Psi)\Wen = -\int_{\partial \Omega}K_0 \nabla \Phi \cdot \normal \Uen.
\end{aligned}
\]
Summing up the last two identities and integrating in time we observe that the terms involving the time derivatives vanish due to the initial and final conditions on $\Uen,\Wen,\Phi,\Psi$.
Consider now the non-linear terms containing $f$
\[
\begin{aligned}
& \int_0^T \int_\Omega (1-\chi_\omegaen)(f(\uen,\wen)-f(u,w))\Phi - \int_0^T\int_\Omega (f_u(u,w)\Uen + f_w(u,w)\Wen)\Phi \\
& = \int_0^T \int_\Omega (1-\chi_\omegaen)(f(\uen,\wen) - f(u,w) - f_u(u,w)\Uen - f_w(u,w)\Wen) \Phi \\
&+ \int_0^T \int_\omegaen (f_u(u,w)\Uen + f_w(u,w)\Wen)\Phi.
\end{aligned}
\]
Observe that by means of the Lagrange's mean value theorem and the Lipschitz-continuity of the functions $f_u,f_w$ with constants $L_{f_u}, L_{f_w}$ we have
\[
\begin{aligned}
&\int_0^T \int_\Omega (1-\chi_\omegaen)\left[ \left(f_u(u_{\xi_1}, w_{\xi_1})-f_u(u,w)\right) \Uen + \left(f_w(u_{\xi_1}, w_{\xi_1})-f_w(u,w)\right)\Wen \right] \Phi \\
& \quad \leq \int_0^T \int_\Omega (1-\chi_\omegaen)\left[L_{f_u} U_{\varepsilon_n}^2 + L_{f_u} \Uen\Wen + L_{f_w}\Uen\Wen + L_{f_w} W_{\varepsilon_n}^2 \right] \Phi \\
& \quad \leq c (\norm{L^2(Q_T)}{\Uen}^2 + \norm{L^2(Q_T)}{\Wen}^2) = o(|\omegaen|),
\end{aligned}
\]
whereas
\[
\begin{aligned}
& \int_0^T \int_\omegaen (f_u(u,w)\Uen + f_w(u,w)\Wen)\Phi \leq c \int_0^T \int_\Omega \chi_\omegaen (\Uen+\Wen) \\
& \quad \leq c |\omegaen|^\half (\norm{L^2(Q_T)}{\Uen} + \norm{L^2(Q_T)}{\Wen}) = o(|\omegaen|).
\end{aligned}
\]
Analogous estimates can be obtained for the terms containing $g$. Finally, 
\[
\begin{aligned}
\int_0^T \int_{\partial \Omega} K_0 \nabla \Phi \cdot \normal \Uen &= |\omegaen| \int_0^T \int_\Omega \frac{\chi_\omegaen}{|\omegaen|}\left[(K_0 - K_1)\nabla \uen \cdot \nabla \Phi + f(u,w)\Phi \right] + o(|\omegaen|) \\
&= |\omegaen| \sum_{i,j} \int_0^T \int_\Omega \frac{\chi_\omegaen}{|\omegaen|} [K_0 - K_1]_{ji} \frac{\partial \uen}{\partial x_i} \frac{\partial \Phi}{\partial x_j}  \\
& \quad + \int_0^T \int_\Omega \frac{\chi_\omegaen}{|\omegaen|}f(u,w)\Phi + o(|\omegaen|).
\end{aligned}
\]
Thanks to the regularity of $\Phi$, and employing the computed weak* limits, we derive
\[
\begin{aligned}
&\int_0^T \int_{\partial \Omega} K_0 \nabla \Phi \cdot \normal (\uen-u) \\
& \quad = |\omegaen| \left( \sum_{i,j,k} \int_0^T\int_\Omega \mathcal{M}_{kj} [K_0 - K_1]_{ki}\frac{\partial u}{\partial x_i} \frac{\partial \Phi}{\partial x_j} d\mu\ dt + \int_0^T\int_\Omega f(u,w)\Phi d\mu \ dt \right)+ o(|\omegaen|) \\
& \quad = |\omegaen| \int_0^T\int_\Omega \left[ \mathcal{M}(K_0-K_1) \nabla u \cdot \nabla \Phi + f(u,w)\Phi\right] d\mu\ dt + o(|\omegaen|).
\end{aligned}
\]
which concludes the proof.
\end{proof}

\section{Reconstruction algorithm: a topology optimization approach}
\label{algorithm}
The asymptotic expansion provided in Theorem \ref{th:3} allows to describe the perturbation of the electrical potential on the boundary of the domain due to the presence of a small conductivity inhomogeneity $\omega_{\varepsilon}$. 
In order to derive a reconstruction algorithm for problem \eqref{eq:inv}, we introduce the mismatch functional $J$
\begin{equation}
J(\omega) = \half \int_0^T \int_{\partial \Omega} (u_\varepsilon - u_{meas})^2,
\label{eq:J}
\end{equation}
where $u_\varepsilon=u_{\omega_\varepsilon}$ solves the perturbed problem \eqref{eq:perturbed} in the presence of an ischemic region $\omega_\varepsilon$. We now prove that the functional $J$ restricted to the class of inclusions satisfying \eqref{eq:separ}, \eqref{eq:small} admits an asymptotic expansion with respect to the size of the inclusion. Moreover, as it is shown in Theorem \ref{th:4}, the first-order term of the expansion (which will be denoted as $G$, the topological gradient of $J$) can be computed by solving the unperturbed problem and a suitable adjoint problem. 
In particular, we restrict ourselves to inclusions $\omegae$ satisfying \eqref{eq:separ} and of the form
\begin{equation}
\omegae = \{z+ \varepsilon B\},
\label{eq:omegaez}
\end{equation}
where $B$ is a bounded, smooth set containing the origin. 

We have the following
\begin{theorem}
Consider a family $\{\omegae\}$ satisfying \eqref{eq:separ} and \eqref{eq:omegaez}. Then, there exists a matrix $\mathcal{M}$ (which may depend on $z$, $B$, $K_0$ and $K_1$) such that, as $\varepsilon\to 0$,
\[
\begin{aligned}
J(\omegae) = &J(0) +  |\omegae| \int_0^T \left[ \mathcal{M}(K_0(z)-K_1(z))\nabla u(z,t) \nabla \Phi(z,t) + f(u(z,t),w(z,t))\Phi(z,t) \right]\dt \\
&+ o(|\omegae|),
\end{aligned}
\]
where $(u,w)$ solves \eqref{eq:formaforte} and $(\Phi,\Psi)$ is the solution of the \textit{adjoint problem}:
\begin{equation}
\left\{
\begin{aligned}
\partial_t \Phi + div(K_0 \nabla \Phi) - f_u(u,w)\Phi - g_u(u,w)\Psi &= 0 \quad &\textit{ in } Q_T,\\
K_0 \nabla \Phi \cdot \normal &= u - u_{meas} \quad &\textit{ on } \Gamma_T,\\
\partial_t \Psi - f_w(u,w)\Phi - g_w(u,w)\Psi &= 0 \quad &\textit{ in } Q_T,\\
\Phi(\cdot,T) = 0 \qquad \Psi(\cdot,T) &= 0 \quad &\textit{ in } \Omega.
\end{aligned}
\right.
\label{eq:adjoint}
\end{equation}
\label{th:4}
\end{theorem}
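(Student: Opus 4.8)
The plan is to deduce Theorem \ref{th:4} from the asymptotic expansion of Theorem \ref{th:3}, using as test pair $(\Phi,\Psi)$ precisely the solution of the adjoint problem \eqref{eq:adjoint}. First I would expand the square in the definition \eqref{eq:J} of $J$: writing $\ue - u_{meas} = (u - u_{meas}) + \Ue$ with $\Ue = \ue - u$ gives
\[
J(\omegae) - J(0) = \int_0^T\!\!\int_{\dOO}(u - u_{meas})\,\Ue\;\dt + \half\int_0^T\!\!\int_{\dOO}\Ue^2\;\dt .
\]
The quadratic remainder is negligible: since $\omegae$ is compactly contained in $\mathcal{C}_0$, on $(\Omega\setminus\mathcal{C}_0)\times(0,T)$ the function $\Ue$ solves the linear parabolic equation $\partial_t\Ue - \diverg(K_0\nabla\Ue) = -(f(\ue,\we) - f(u,w))$ (cf. \eqref{eq:mn2} with $\chie\equiv 0$ there) with homogeneous conormal condition on $\Gamma_T$; the right-hand side is bounded in $L^2(Q_T)$ by $C(\norm{L^2(Q_T)}{\Ue} + \norm{L^2(Q_T)}{\We})$, which by the third estimate of Proposition \ref{prop:energy} is $O(|\omegae|^{1/2+\beta})$ for some $\beta>0$. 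Interior and up-to-the-boundary parabolic estimates in a neighbourhood of $\dOO$, followed by the trace inequality, then give $\norm{L^2(\dOO\times(0,T))}{\Ue} \le C|\omegae|^{1/2+\beta}$, so that $\half\int_0^T\int_{\dOO}\Ue^2 = O(|\omegae|^{1+2\beta}) = o(|\omegae|)$.

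For the linear term I would invoke Theorem \ref{th:3}. The adjoint system \eqref{eq:adjoint} consists precisely of the interior equations \eqref{eq:PhiPsi} together with the final conditions $\Phi(\cdot,T) = \Psi(\cdot,T) = 0$; hence, provided it admits a solution with $\Phi\in C^1(\overline{Q}_T)$ and $\Psi\in C(\overline{Q}_T)$, the pair $(\Phi,\Psi)$ is admissible in Theorem \ref{th:3}. Existence, uniqueness and this regularity follow from standard parabolic theory: reversing time transforms \eqref{eq:adjoint} into a forward linear parabolic problem with zero initial data, smooth coefficients $f_u(u,w),g_u(u,w),f_w(u,w),g_w(u,w)$ (by Theorem \ref{th:1} and \eqref{eq:AP}) and nonhomogeneous Neumann datum $u - u_{meas}\in C^{\alpha,\alpha/2}(\Gamma_T)$; a Faedo--Galerkin construction gives the weak solution, Schauder estimates up to the boundary upgrade it to $\Phi\in C^1(\overline{Q}_T)$, and since the second equation is a pointwise linear ODE in $t$ for $\Psi$ (note $g_w(u,w)=\epsilon$), $\Psi\in C(\overline{Q}_T)$ follows. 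With this choice, the Neumann condition $K_0\nabla\Phi\cdot\normal = u - u_{meas}$ on $\Gamma_T$ identifies the left-hand side of \eqref{eq:expansion} with $\int_0^T\int_{\dOO}(u - u_{meas})\Ue$, so Theorem \ref{th:3}, applied along an arbitrary sequence $\varepsilon_n\to0$ and a suitable subsequence, yields
\[
\int_0^T\!\!\int_{\dOO}(u - u_{meas})\,\Ue = |\omegae|\int_0^T\!\!\int_\Omega\big[\mathcal{M}(K_0-K_1)\nabla u\cdot\nabla\Phi + f(u,w)\Phi\big]\;\dmu\,\dt + o(|\omegae|).
\]

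It remains to identify $\mu$ and $\mathcal{M}$ for the particular family \eqref{eq:omegaez}. Since $\omegae = z + \varepsilon B$, one has $|\omegae|^{-1}\int_{\omegae}\phi \to \phi(z)$ for every $\phi\in C(\overline\Omega)$, hence $\mu = \delta_z$; and after the blow-up $x = z + \varepsilon y$ the correctors $\venj$ converge to the solutions of the exterior transmission problem associated with the inclusion $B$ and the constant conductivities $K_0(z)$, $K_1(z)$, which identifies $\mathcal{M}$ as the corresponding symmetric polarization tensor $\mathcal{M} = \mathcal{M}(B,K_0(z),K_1(z))$, exactly as in the elliptic analysis of \cite{art:capdebosq}. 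As these limits are uniquely determined, the expansion holds along the full family $\{\omegae\}$, not merely along a subsequence. Finally, $u$ and $\Phi$ are of class $C^1$ in a neighbourhood of $z$ (again by interior parabolic regularity, $\omegae$ being well separated from $\dOO$ and the data smooth there), so the $\mu$-integral collapses to the pointwise value at $z$, giving
\[
|\omegae|\int_0^T\!\!\int_\Omega\big[\mathcal{M}(K_0-K_1)\nabla u\cdot\nabla\Phi + f(u,w)\Phi\big]\dmu\,\dt = |\omegae|\int_0^T\big[\mathcal{M}(K_0(z)-K_1(z))\nabla u(z,t)\cdot\nabla\Phi(z,t) + f(u(z,t),w(z,t))\Phi(z,t)\big]\dt,
\]
and adding the $o(|\omegae|)$ contribution of the quadratic term gives the stated expansion of $J(\omegae)$. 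The main obstacle I anticipate is the regularity of the adjoint state --- producing a solution of \eqref{eq:adjoint} smooth enough (at least $C^1$ up to $\overline{Q}_T$, with possible corner issues at $\dOO\times\{T\}$ stemming from the incompatibility of the Neumann datum with the final condition) to be legitimately inserted into Theorem \ref{th:3}; a secondary technical point is the blow-up argument passing from the abstract measure $\mu$ and matrix $\mathcal{M}$ of Theorem \ref{th:3} to the Dirac mass at $z$ and the polarization tensor, together with the uniqueness that promotes the subsequential statement to one valid for the whole family.
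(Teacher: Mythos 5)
Your overall architecture matches the paper's: split $J(\omegae)-J(0)$ into the linear boundary term plus the quadratic remainder, feed the adjoint pair $(\Phi,\Psi)$ into Theorem \ref{th:3} via its Neumann datum, identify $\mu=\delta_z$ from \eqref{eq:omegaez}, and dispose of the quadratic term separately. However, the step you yourself flag as the main obstacle is a genuine gap, and the way you propose to close it would fail. You want $\Phi\in C^1(\overline{Q}_T)$ from Schauder theory, but the data of \eqref{eq:adjoint} are incompatible at the corner $\partial\Omega\times\{T\}$: the final condition $\Phi(\cdot,T)=0$ forces $\nabla\Phi(\cdot,T)=0$ in $\Omega$, hence $K_0\nabla\Phi\cdot\normal=0$ there, while the Neumann datum $u-u_{meas}=-(\ue-u)|_{\Gamma_T}$ is in general nonzero at $t=T$. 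So $C^1$ regularity up to the closed cylinder is not available, and $(\Phi,\Psi)$ is not an admissible test pair for Theorem \ref{th:3} as stated. The paper's resolution is not to upgrade the adjoint state but to \emph{downgrade the hypothesis}: it observes that Lemma \ref{lem:asym} (and hence Theorem \ref{th:3}) only uses $\phi\in W^{1,\infty}(Q_T)$, and then proves, by a lifting of the boundary datum and reflection/flattening arguments, that the solution of \eqref{eq:adjoint} does lie in $W^{1,\infty}(Q_T)\times C(\overline{Q}_T)$. Without this extension (or an equivalent approximation argument) your invocation of Theorem \ref{th:3} is not justified.

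On the quadratic remainder your route genuinely differs from the paper's and is plausible, but as sketched it needs care. The paper proves $\int_0^T\int_{\dOO}(\ue-u)^2=o(|\omegae|)$ via Lemma \ref{lem:boundary}: it re-applies the asymptotic expansion with a second adjoint pair $(\Theta,\Xi)$ whose Neumann datum is $\ue-u$, and then derives interior $H^3$ (hence $W^{1,\infty}$ on a compact set containing $\omegae$) bounds of order $|\omegae|^{1/2}$ for $\Theta$, yielding the rate $O(|\omegae|^{3/2})$. You instead localize the equation for $\Ue=\ue-u$ near $\dOO$, where $\chie\equiv 0$, and use the improved rate $\norm{L^2(Q_T)}{\Ue}=O(|\omegae|^{1/2+\beta})$ from Proposition \ref{prop:energy}. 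The danger is that a naive cutoff produces commutator terms containing $\nabla\Ue$, which is only $O(|\omegae|^{1/2})$ in $L^2(Q_T)$, so a crude $L^2(0,T;H^2)$ local estimate would not beat the trivial trace bound $O(|\omegae|^{1/2})$. You must either keep the commutators in divergence form and work at the $L^2(0,T;H^{-1})$ level, or, more simply, use the multiplicative trace inequality $\norm{L^2(\dOO)}{v}^2\leq C\norm{L^2(\Omega)}{v}\norm{H^1(\Omega)}{v}$ together with the second and third estimates of Proposition \ref{prop:energy}, which directly gives $\int_0^T\int_{\dOO}\Ue^2\leq C|\omegae|^{1+\beta}=o(|\omegae|)$. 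Done this way, your treatment of the remainder is actually shorter than the paper's Lemma \ref{lem:boundary}, at the cost of a slightly worse (but still sufficient) rate.
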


\begin{proof}
First of all, we note that, since Lemma 3.3 holds under the weaker assumption  on the test functions, $\phi\in W^{1,\infty}({Q}_T)$, then the validity of Theorem \ref{th:3} can be easily extended when $(\Phi,\Psi)\in (W^{1,\infty}({Q}_T), C(\overline{Q}_T))$. We now show that the solution to (\ref{eq:adjoint}) enjoys this regularity. 
In fact this can be ensured by a lifting argument, since the boundary datum $u-u_{meas} \in C^{2+\alpha,1+\alpha/2}(\Gamma_T)$. Estimates of the $W^{1,\infty}(Q_T)$ norm of $\Phi$ can be obtained by flattening the boundary and by reflection arguments, see for example \cite[Chapter 7, Theorem 2.2]{book:lady} for a two-dimensional case.

It is straightforward to verify that
\begin{equation}
\begin{aligned}
J(\omegae) - J(0) &= \int_0^T\int_{\partial \Omega} (\ue - u)(u - u_{meas}) + \half \int_0^T\int_{\partial \Omega} (\ue - u)^2.
\end{aligned}
\label{eq:solito}
\end{equation}
The first term in the right-hand side of \eqref{eq:solito} involves the boundary condition of the adjoint problem, and in particular it can be written as $\int_0^T\int_{\partial \Omega} K_0 \nabla \Phi \cdot \normal (\ue - u)$.
We now apply Theorem \ref{th:3} in order to conclude that
\[
	\int_0^T\!\!\int_{\partial \Omega} (\ue - u)(u - u_{meas}) = |\omegae| \int_0^T \!\!\int_\Omega \left[ \mathcal{M}(K_0-K_1)\nabla u \cdot \nabla \Phi + f(u,w)\Phi \right]d\mu + o(|\omegae|).
\]
According to assumption \eqref{eq:omegaez}, the limit measure $\mu$ is independent of the choice of the subsequence and is equal to $\delta_z$, the Dirac measure centered in $z$. Finally, the second term in the right-hand side of \eqref{eq:solito} is $o(|\omegae|)$ by means of Lemma \ref{lem:boundary} below.
\end{proof}

\begin{remark}
	By Assumption 2.1  and by the homogeneous boundary conditions on $\ue$ we can conclude, using standard local regularity results on parabolic equations applied in a neighbourhood of $\partial\Omega$, that $\ue \in C^{2+\alpha,1+\alpha/2}(\Gamma_T)$
\end{remark}

\begin{lemma}
Let $\{\omegae\}$ satisfy \eqref{eq:separ} and \eqref{eq:omegaez}. Then,
\[
  \int_0^T\int_{\partial \Omega} (\ue - u)^2 = o(|\omegae|).
\]
\label{lem:boundary}
\end{lemma}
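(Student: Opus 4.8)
The plan is to use the separation hypothesis \eqref{eq:separ} in an essential way: because $\omegae\subset\mathcal{C}_0$ with $\mathrm{dist}(\mathcal{C}_0,\partial\Omega)\geq d_0>0$, the difference $\Ue=\ue-u$ solves, \emph{in a fixed collar of $\partial\Omega$}, a parabolic equation with \emph{no source term produced by the inclusion}. This makes it possible to upgrade the global bound $\norm{L^2(0,T;H^1(\Omega))}{\Ue}\le C|\omegae|^{\half}$ of Proposition \ref{prop:energy} to a \emph{local} $L^2(0,T;H^1)$ estimate near $\partial\Omega$ whose right-hand side is $\norm{L^2(Q_T)}{\Ue}$; the third estimate of Proposition \ref{prop:energy}, namely $\norm{L^2(Q_T)}{\Ue}\le C|\omegae|^{\half+\eta}$ with $\eta>0$, then turns via a trace inequality into the claimed $o(|\omegae|)$ bound on $\partial\Omega$. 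Only the properties \eqref{eq:separ} and $|\omegae|\to0$ (guaranteed by \eqref{eq:small}) of the family $\{\omegae\}$ are used.

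Concretely, I would fix two relatively open neighbourhoods $\mathcal{U}_1\subset\subset\mathcal{U}_2$ of $\partial\Omega$ in $\overline\Omega$ with $\overline{\mathcal{U}_2}\cap\mathcal{C}_0=\emptyset$, and a cut-off $\zeta\in C^\infty(\overline\Omega)$ with $\zeta\equiv1$ on $\mathcal{U}_1$ and $\mathrm{supp}\,\zeta\subset\mathcal{U}_2$. Recalling that, by \eqref{eq:mn2}, $\Ue$ solves $\partial_t\Ue-\mathrm{div}(K_0\nabla\Ue)+(1-\chi_\varepsilon)(f(\ue,\we)-f(u,w))=-\mathrm{div}((K_0-K_1)\chi_\varepsilon\nabla\ue)+\chi_\varepsilon f(u,w)$ together with $K_0\nabla\Ue\cdot\normal=0$ on $\Gamma_T$ (since $K_\omega=K_0$ near $\partial\Omega$), I would test this equation with $\varphi=\zeta^2\Ue$. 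Because $\mathrm{supp}\,\zeta$ is disjoint from $\omegae$, every term carrying $\chi_\varepsilon$ disappears, the boundary integral vanishes by the homogeneous Neumann condition, and what survives is, after using uniform ellipticity ($k_{min}$), Young's inequality to absorb the term containing $\nabla\zeta$, the Lipschitz bound $|f(\ue,\we)-f(u,w)|\le L(|\Ue|+|\We|)$ (valid since $(u,w),(\ue,\we)\in S$), and the initial condition $\Ue(\cdot,0)=0$:
\[
\norm{L^2(\mathcal{U}_1\times(0,T))}{\nabla\Ue}^2 \le \norm{L^2(Q_T)}{\zeta\nabla\Ue}^2 \le C\left(\norm{L^2(Q_T)}{\Ue}^2+\norm{L^2(Q_T)}{\We}^2\right).
\]

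It then remains to observe that $\We$ is slaved to $\Ue$: from $\partial_t\We+(g(\ue,\we)-g(u,w))=0$, $\We(\cdot,0)=0$, writing $g(\ue,\we)-g(u,w)=\hat g_u\Ue+\hat g_w\We$ by the mean value theorem with $\hat g_u,\hat g_w$ bounded, and solving this linear ODE in $t$ for each fixed $x$, one gets $\norm{L^2(Q_T)}{\We}\le C\norm{L^2(Q_T)}{\Ue}$ (equivalently, this bound is already produced inside the proof of Proposition \ref{prop:energy}). Hence $\norm{L^2(0,T;H^1(\mathcal{U}_1))}{\Ue}\le C\norm{L^2(Q_T)}{\Ue}\le C|\omegae|^{\half+\eta}$. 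Applying, for a.e.\ $t\in(0,T)$, the trace inequality $\norm{L^2(\partial\Omega)}{\Ue(\cdot,t)}\le C\norm{H^1(\mathcal{U}_1)}{\Ue(\cdot,t)}$ (legitimate since $\partial\Omega\in C^{2+\alpha}$ is part of $\partial\mathcal{U}_1$), squaring and integrating in time yields
\[
\int_0^T\int_{\partial\Omega}(\ue-u)^2 \le C\,\norm{L^2(0,T;H^1(\mathcal{U}_1))}{\Ue}^2 \le C|\omegae|^{1+2\eta}=o(|\omegae|),
\]
since $\eta>0$ and $|\omegae|\to0$. The only genuine subtlety — and the reason the crude estimate $\int_0^T\int_{\partial\Omega}(\ue-u)^2\le C\norm{L^2(0,T;H^1(\Omega))}{\Ue}^2\le C|\omegae|$ falls exactly one power short — is that one must localise strictly away from $\omegae$ \emph{before} bounding $\nabla\Ue$, so that it is the sharper power $|\omegae|^{\frac12+\eta}$ and not $|\omegae|^{\frac12}$ that enters the trace; the coupling with the recovery variable is harmless precisely because $\We$ obeys a pointwise-in-$x$ ODE driven by $\Ue$.
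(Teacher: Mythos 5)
Your proof is correct, but it follows a genuinely different and more elementary route than the paper. The paper treats $\int_0^T\int_{\partial\Omega}(\ue-u)^2$ as another instance of the pairing handled by Theorem \ref{th:3}: it introduces an adjoint pair $(\Theta,\Xi)$ solving the backward system \eqref{eq:test} with Neumann datum $\ue-u$, so that the boundary integral equals $|\omegae|\int_0^T\int_\Omega\left[\mathcal{M}(K_0-K_1)\nabla u\cdot\nabla\Theta+f(u,w)\Theta\right]d\mu+o(|\omegae|)$, and then spends most of the proof establishing an interior $L^2(0,T;H^3(K))$ bound for $\Theta$ in terms of $\norm{L^2(0,T;H^1(\Omega))}{\ue-u}$ (by differentiating the equation twice and localising with cut-offs), so that, via $H^3\hookrightarrow W^{1,\infty}$ for $d\le 3$ and $\mathrm{supp}\,\mu=\{z\}\subset K$, the first-order coefficient is itself $O(|\omegae|^{1/2})$; this yields the sharper rate $O(|\omegae|^{3/2})$. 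You instead localise the primal error $\Ue$ in a collar of $\partial\Omega$ disjoint from $\mathcal{C}_0$, where the inclusion produces no source, obtain a Caccioppoli-type bound $\norm{L^2(0,T;H^1(\mathcal{U}_1))}{\Ue}\le C\norm{L^2(Q_T)}{\Ue}\le C|\omegae|^{1/2+\eta}$, and conclude by the trace inequality, getting $O(|\omegae|^{1+2\eta})$. Your argument is shorter, avoids Theorem \ref{th:3} and the interior $H^3$ machinery entirely, and does not use the specific form \eqref{eq:omegaez} of the inclusions (only \eqref{eq:separ} and $|\omegae|\to 0$), at the price of a slightly weaker exponent ($1+2\eta\le 7/5$ with the $\eta\le 1/5$ provided by Proposition \ref{prop:energy}, versus $3/2$). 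Both give $o(|\omegae|)$, which is all the lemma requires, and the individual steps you use (admissibility of $\zeta^2\Ue$ as a test function for the weak formulation, the slaving of $\We$ to $\Ue$ through the pointwise-in-$x$ ODE, the trace theorem on the collar $\mathcal{U}_1$) are all legitimate.
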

\begin{proof}
	By the same argument as in the proof of Theorem \ref{th:4} and by Remark 4.2,
	\[
  \int_0^T\int_{\partial \Omega} (\ue - u)^2 = |\omegae|\int_0^T\int_\Omega \left[\mathcal{M}(K_0 - K_1)\nabla u\cdot \nabla \Theta + f(u,w)\Theta \right]d\mu + o(|\omegae|),
\]
where $(\Theta,\Xi)$ satisfies
\begin{equation}
\left\{
\begin{aligned}
\partial_t \Theta + div(K_0 \nabla \Theta) - f_u(u,w)\Theta - g_u(u,w)\Xi &= 0 \quad &\textit{ in } Q_T,\\
K_0 \nabla \Theta \cdot \normal &= \ue - u \quad &\textit{ on } \Gamma_T,\\
\partial_t \Xi - f_w(u,w)\Theta - g_w(u,w)\Xi &= 0 \quad &\textit{ in } Q_T,\\
\Theta(\cdot,T) = 0 \qquad \Xi(\cdot,T) &= 0 \quad &\textit{ in } \Omega.
\end{aligned}
\right.
\label{eq:test}
\end{equation}
and $(\Theta,\Xi)\in(W^{1,\infty}({Q}_T), C(\overline{Q}_T)) $.

We now focus on proving that for a compact set $K$ such that $\omegae\subset K$ and satisfying $dist(K,\partial \Omega) \geq d_0$ one has
\begin{equation}
\norm{L^2(0,T;H^3(K))}{\Theta} \leq \norm{L^2(0,T;H^1(\Omega))}{\ue-u},
\label{eq:H3}
\end{equation}
and, as a consequence of a Sobolev immersion, (\ref{eq:H3})  implies (here $d=2,3$)
\[
\norm{L^2(0,T;W^{1,\infty}(K))}{\Theta} \leq \norm{L^2(0,T;H^1(\Omega))}{\ue-u} \leq |\omegae|^\half.
\]
By a combination of \eqref{eq:separ} and \eqref{eq:omegaez} we can ensure that the support of the measure $\mu = \delta_z$ is contained in $K$. This would entail that
\[
\int_0^T\int_\Omega \left[\mathcal{M}(K_0 - K_1)\nabla u\cdot \nabla \Theta + f(u,w)\Theta \right]d\mu \leq c |\omegae|^\half,
\]
hence $\int_0^T\int_{\partial \Omega} (\ue - u)^2 = O(|\omegae|^{3/2}) = o(|\omegae|)$.
\par
In order to prove \eqref{eq:H3}, by standard Faedo-Galerkin technique applied to the linear system \eqref{eq:test}, one has
\begin{equation}
\norm{L^\infty(0,T;L^2(\Omega))}{\Theta} + \norm{L^\infty(0,T;L^2(\Omega))}{\Xi} + \norm{L^2(0,T;H^1(\Omega))}{\Theta} \leq c \norm{L^2(0,T;L^2(\partial \Omega))}{\ue - u}.
\label{eq:L2bound}
\end{equation}
Moreover, according to \eqref{eq:test}, $\Theta$ is the solution of the following problem
\[
\left\{
\begin{aligned}
\partial_t \Theta + div(K_0 \nabla \Theta) - f_u(u,w)\Theta &= g_u(u,w)\Xi \quad &\textit{ in } Q_T,\\
K_0 \nabla \Theta \cdot \normal &= \ue - u \quad &\textit{ on } \Gamma_T,\\
\Theta(\cdot,T) &= 0 \quad &\textit{ in } \Omega.
\end{aligned}
\right.
\]
Analogously to what stated in \cite[Chapter 4, Theorem 9.1]{book:lady}, it holds that
\[
\norm{L^2(0,T;H^2(\Omega))}{\Theta} \leq C \left(\norm{L^2(Q_T)}{g_u(u,w)\Xi} + \norm{L^2(0,T;H^{1/2}(\partial \Omega))}{\ue - u}\right)
\]
and in conclusion, by the regularity of $g, u, w$ and by \eqref{eq:L2bound}
\begin{equation}
\norm{L^2(0,T;H^2(\Omega))}{\Theta} \leq C \norm{L^2(0,T;H^{1/2}(\partial \Omega))}{\ue - u}.
\label{eq:H2bound}
\end{equation}
Similar estimates can be derived also for $\Xi$ since $f,g \in C^3(\R^2)$ and $u,w \in C^{2,1}(\overline{Q_T})$. This last property is guaranteed on $u$ by Theorem \ref{th:1}, whereas it can be extended to $w$ since we consider $w_0 \in C^2(\overline\Omega)$. Now, by computing the solution of the third equation in \eqref{eq:test} in closed form, we can easily verify that
\begin{equation}
\norm{L^2(0,T;H^2(\Omega))}{\Xi} \leq C \norm{L^2(0,T;H^2(\Omega))}{\Theta} \leq C \norm{L^2(0,T;H^{1/2}(\partial \Omega))}{\ue - u}.
\label{eq:Xi}
\end{equation}
Consider the change of variable $s = T-t$ and denote by $\widehat{\Theta}(\cdot,s) = \Theta(\cdot,T-t), \widehat{\Xi}(\cdot,s) = \Xi(\cdot,T-t)$. We now focus on the first equation in \eqref{eq:test} and compute the second derivatives of each term. Then, $V \defeq \frac{\partial^2 \widehat{\Theta}}{\partial x_i \partial x_j}$ satisfies the following equation in a weak sense:
\begin{equation}
\partial_{s} V -div(K_0 \nabla V) + f_u V = R,
\label{eq:VR}
\end{equation}
being
\[
\begin{aligned}
&R =  div\left(\frac{\partial K_0}{\partial x_i} \nabla \frac{\partial \widehat{\Theta}}{\partial x_j} \right) + div\left(\frac{\partial K_0}{\partial x_j} \nabla \frac{\partial \widehat{\Theta}}{\partial x_i} \right) + div\left(\frac{\partial^2 K_0}{\partial x_i \partial x_j} \nabla \widehat{\Theta} \right)\\
& -\left(f_{uuu}\frac{\partial u}{\partial x_i}\frac{\partial u}{\partial x_j} + f_{uuw}\frac{\partial u}{\partial x_i} \frac{\partial w}{\partial x_j} + f_{uu} \frac{\partial^2 u}{\partial x_i \partial x_j} + f_{uwu}\frac{\partial w}{\partial x_i}\frac{\partial u}{\partial x_j} + f_{uww}\frac{\partial w}{\partial x_i} \frac{\partial w}{\partial x_j} + f_{uw} \frac{\partial^2 w}{\partial x_i \partial x_j} \right)\widehat{\Theta} \\
&-\left( f_{uu}\frac{\partial u}{\partial x_i} + f_{uw}\frac{\partial w}{\partial x_i}\right)\frac{\partial \widehat{\Theta}}{\partial x_j} - \left( f_{uu}\frac{\partial u}{\partial x_j} + f_{uw}\frac{\partial w}{\partial x_j}\right)\frac{\partial \widehat{\Theta}}{\partial x_i} \\
& - \left(g_{uuu}\frac{\partial u}{\partial x_i}\frac{\partial u}{\partial x_j} + g_{uuw}\frac{\partial u}{\partial x_i} \frac{\partial w}{\partial x_j} + g_{uu} \frac{\partial^2 u}{\partial x_i \partial x_j} + g_{uwu}\frac{\partial w}{\partial x_i}\frac{\partial u}{\partial x_j} + g_{uww}\frac{\partial w}{\partial x_i} \frac{\partial w}{\partial x_j} + g_{uw} \frac{\partial^2 w}{\partial x_i \partial x_j} \right)\widehat{\Xi} \\
& - \left( g_{uu}\frac{\partial u}{\partial x_i} + g_{uw}\frac{\partial w}{\partial x_i}\right)\frac{\partial \widehat{\Xi}}{\partial x_j} - \left( g_{uu}\frac{\partial u}{\partial x_j} + g_{uw}\frac{\partial w}{\partial x_j}\right)\frac{\partial \widehat{\Xi}}{\partial x_i} - g_u \frac{\partial^2 \widehat{\Xi}}{\partial x_i \partial x_j}.
\end{aligned}
\]
Let $\Omega_1$ be an open subset of $\Omega$, such that $\omegae\subset K\subset \Omega_1 \subset \subset \Omega$. By interior regularity results it holds that $(\widehat{\Theta},\widehat{\Xi})$ is smooth on $\Omega_1\times[0,T]$,  which entails by the initial conditions that
\begin{equation}
\frac{\partial^2\widehat{\Theta}}{\partial x_i \partial x_j}(x,0)=0,\quad\forall x\in \Omega_1.
\label{eq:Theta0}
\end{equation}
According to the smoothness of $f,g,u,w$ and to the $H^2$ bounds \eqref{eq:H2bound} and \eqref{eq:Xi}, we get
\begin{equation}
\norm{L^2(0,T;H^{-1}(\Omega_1))}{R} \leq C \norm{L^2(0,T;H^{1/2}(\partial \Omega))}{\ue - u}.
\label{eq:Rbound}
\end{equation}
In fact, all the terms in $R$ except the first three belong to $L^2(Q_T)$, with norm bounded by $\norm{L^2(0,T;H^2(\Omega))}{\Theta}$ or by $\norm{L^2(0,T;H^2(\Omega))}{\Xi}$. Set $H:=div\left(\frac{\partial K_0}{\partial x_i} \nabla \frac{\partial \widehat{\Theta}}{\partial x_j} \right)$, then, for any $v \in H^1_0(\Omega_1)$,
\[
 |\langle H,v\rangle_{\star}|= \left|\int_{\Omega_1} \frac{\partial K_0}{\partial x_i} \nabla \frac{\partial \widehat{\Theta}}{\partial x_j} \cdot \nabla v \right|\leq C\norm{H^2(\Omega)}{\widehat{\Theta}}\norm{H^1(\Omega_1)}{v}.
\]
where $\langle \cdot,\cdot \rangle_{\star}$ indicates the duality pairing between the involved spaces. Integrating in time last relation we finally get
\[
\norm{L^2(0,T;H^{-1}(\Omega_1))}{H} \leq C\norm{L^2(0,T;H^{1/2}(\partial \Omega))}{\ue - u}.
\]
With similar arguments we can estimate also the second and third term in the definition of the function $R$ leading finally to (\ref{eq:Rbound}).

Consider now a test function $\zeta \in C^\infty_C(\Omega_1)$, $0 \leq \zeta \leq 1$, such that $\zeta = 1$ in $K$.
According to \eqref{eq:VR}, by simple computations we verify that $\widetilde{V} = V\zeta$ satisfies
\begin{equation}
    \partial_s \widetilde{V} - div(K_0 \nabla \widetilde{V}) + f_u \widetilde{V} = \widetilde{R},
    \label{eq:VRt}
\end{equation}
being $\widetilde{R} = \zeta R - 2 div(V K_0 \nabla \zeta) + V div(K_0 \nabla \zeta)$. It holds that $div(V K_0 \nabla \zeta) \in L^2(0,T;H^{-1}(\Omega_1))$, and
\[
\norm{L^2(0,T,H^{-1})}{div(V K_0 \nabla \zeta)} \leq C \norm{L^2(Q_T)}{V},
\]
hence we observe that
\begin{equation}
\norm{L^2(0,T,H^{-1}(\Omega_1))}{\widetilde{R}} \leq C ( \norm{L^2(Q_T)}{V} + \norm{L^2(0,T,H^{-1}(\Omega_1))}{R}) \leq C \norm{L^2(0,T;H^{1/2}(\partial \Omega))}{\ue - u}
 \label{eq:Rtilde}
\end{equation}
By standard Faedo-Galerkin argument (see, e.g., \cite[Chap. 18, Par. 3, Theorem 3]{book:dautraylions}, we verify that
\begin{equation}
\norm{L^2(0,T;H^1_0(\Omega_1))}{\widetilde{V}}\leq C \norm{L^2(0,T,H^{-1}(\Omega_1))}{\widetilde{R}}.
    \label{eq:FG}
\end{equation}
Finally, by the definition of $\zeta$, combining \eqref{eq:FG} and \eqref{eq:Rtilde} we conclude that
\[
\norm{L^2(0,T;H^1(K))}{V} \leq \norm{L^2(0,T;H^1_0(\Omega_1))}{\widetilde{V}}\leq C \norm{L^2(0,T;H^{1/2}(\partial \Omega))}{\ue - u}.
\]

The thesis is now proved thanks to the trace inequality and the energy estimates.
\end{proof}
Theorem \ref{th:4} gives a representation formula for the topological gradient $G$, the first order term in the expansion of the mismatch functional $J$. In analogy to \cite{art:BMR,art:bccmr}, we propose a \textit{one-step} reconstruction Algorithm \ref{al:top} for the identification of small inclusions satisfying \eqref{eq:omegaez}.
\begin{algorithm}[H]
\begin{algorithmic}
	\REQUIRE $u_{meas}(x,t) \ \forall \, x \in \partial \Omega$, $t \in (0,T)$.
	\ENSURE approximated centre of the inclusion, $\bar{z}$
	\begin{itemize}
		\item compute $(u,w)$ by solving \eqref{eq:formaforte};
		\item compute $(\Phi,\Psi)$ by solving \eqref{eq:adjoint};
		\item determine the topological gradient $G$ of $J$ according to Theorem \ref{th:4};
		\item find $\bar{z}$ s.t. $G(\bar{z}) \leq G(z) \quad \forall \, z \in \Omega$.
	\end{itemize}
 \end{algorithmic}
\caption{Reconstruction of a small-size inclusion}
\label{al:top}
\end{algorithm}
\begin{remark}
	The polarization tensor $\mathcal{M}$ can be computed in an explicit form, e.g., when the shape $B$ of the inclusion is a disk, even though it is not straightforward in the current anisotropic case. Since $\mathcal{M}$ depends on $K_0$ and $K_1$ and on the shape of the set $B$, which we assume to be a disk, it varies according to the space variable $z$. In order to compute $\mathcal{M}(z) = \mathcal{M}(K_0(z),K_1(z),B)$, we first use \cite[Lemma 4.30]{book:ammari-kang} to transform $K_0$ and $K_1$ in diagonal matrices. By a rescaling of the spatial variables, we can reduce to the case in which one of the two diagonalized tensors is the identity, which allows to apply \cite[Proposition 4.31]{book:ammari-kang}. We remark that the anisotropic rescaling entails that the original circle $B$ is transformed in an ellipse of known semiaxes.
\end{remark}

\section{Numerical results}
\label{results}
In this section we describe the implementation of Algorithm \ref{al:top} and report some numerical results in order to show its effectiveness. In particular, we set our experiments in a two-dimensional idealized geometry, representing a horizontal section of the ventricles. The application of the model on a three-dimensional geometry would be equivalently possible, and will be object of future studies, employing advanced numerical analysis techniques in order to tackle the increased computational effort. We rely on synthetic data, i.e., we solve the monodomain system in presence of a prescribed ischemic region and then use the value of the solution on the boundary (or a portion of it) as an input for the reconstruction algorithm. In order to prevent \textit{inverse crimes}, we employ different numerical settings for the synthetic data generation and for the solution of the unperturbed and adjoint problems, required for the reconstruction algorithm. In particular, we adopt a much more refined discretization, both in space and in time, for the simulation of the synthetic data.
\par
In the following experiments we assume to measure the voltage only on a portion of the heart. As outlined in Section \ref{intro}, measurements of the voltage can be acquired on the inner surface of a ventricle by intracavitary measurements, or, alternatively, we might be able to compute a map of the electrical potential on the epicardium starting from ECG data. This does not affect the reconstruction procedure described in Algorithm \ref{al:top}, apart from the definition of the adjoint problem \eqref{eq:adjoint}, which now prescribes oblique boundary conditions involving $u_{meas}-u$ on the portion of the boundary where the measurements are acquired, and homogeneous Neumann conditions elsewhere.

\subsection{Finite Element approximation}
In order to numerically approximate the solution of the monodomain model, we rely on a Galerkin Finite Element scheme, introducing a tessellation of the domain consisting of triangular elements. In particular, we adopt two different meshes for the solution of the unperturbed (and adjoint) problem and for the generation of the synthetic measurements, the latter being more refined especially close to the boundary of the prescribed ischemic region.
\par
Moreover, in order to reproduce the anisotropic behavior of the conductivity coefficients $K_0$ and $K_1$, we consider the presence of fibers within the domain. We adopt a procedure analogous to the one reported in \cite{Quarteroni2016} for the generation of the fiber directions, resorting on the solution of a Laplace problem with suitable boundary conditions.
Once the direction of the fibers $\vec{e_f}(x) = \vec{e}_1(x)$ is defined within $\Omega$, as well as the transmural vector field $\vec{e_n}(x) = \vec{e}_2(x)$ (obtained by a clockwise rotation of $90^\circ$ of $\vec{e_f}(x)$), the definition of the conductivity tensors is

\[
K_0(x) = k_{0,1} \vec{e_f}(x) \otimes \vec{e_f}(x) + k_{0,2} \vec{e_n}(x) \otimes \vec{e_n}(x), \quad
K_1(x) = k_{1,1} \vec{e_f}(x) \otimes \vec{e_f}(x) + k_{1,2} \vec{e_n}(x) \otimes \vec{e_n}(x).
\]
The numerical mesh for the solution of the background problem and the directions of the fibers are reported in Figure \ref{fig:setup}.
\begin{figure}[h!]
			\centering
			\subfloat[Mesh]{
		    	\includegraphics[width=0.4\textwidth]{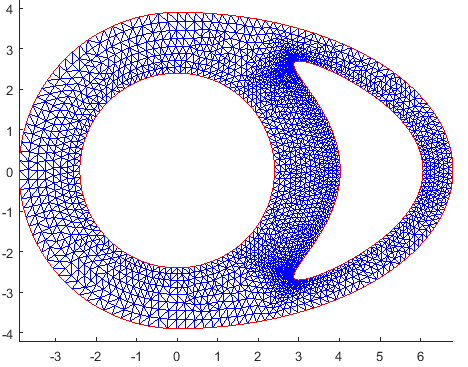}
			}
			\hspace{0.1\textwidth}
			\subfloat[Fibers orientation]{
		    	\includegraphics[width=0.4\textwidth]{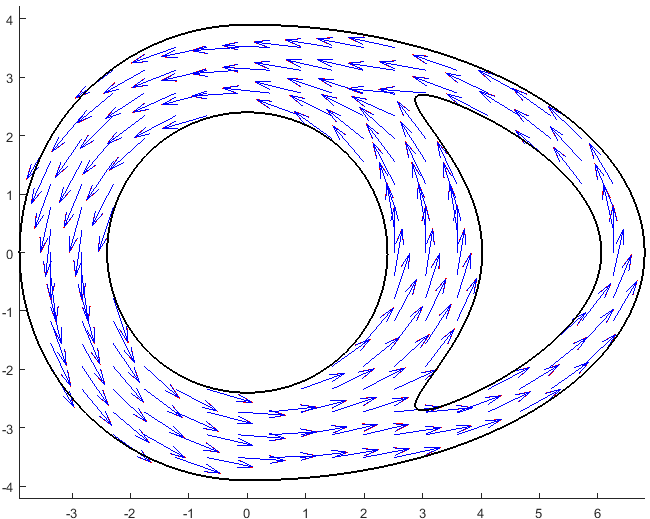}
			}
	\caption{Setup of the numerical test cases}
	\label{fig:setup}
	\end{figure}
	As already mentioned in the introduction, the best choice of the conductivity tensor $K_0$ to be employed in the numerical simulations of the monodomain model is given by the harmonic mean of the intracellular and extracellular conductivity tensors $D_i$ and $D_e$ appearing in the bidomain model that can be expressed by
\begin{equation}
	K_0 = D_e (D_e + D_i)^{-1} D_i.
\label{eq:}
\end{equation}
Exploiting the fact that $D_e$ and $D_i$ have the same eigenvectors (again defined by the fiber and transmural directions), we can therefore compute the eigenvalues of $K_0$ (and analogously of $K_1$) as the harmonic mean of the ones of $D_i$ and $D_e$ as reported, e.g., in \cite[Table 8.1]{book:pavarino}.
The values of the main parameters involved in the numerical simulations are reported in Table \ref{tab:param}.
\begin{table}%
\centering
\begin{tabular}{c|c|c|c|c|c|c}
$k_{0,1}$ & $k_{0,2}$ & $k_{1,1}$ & $k_{1,2}$ & $A$ & $a$ & $\epsilon$ \\ \hline
$1.200$ & $0.2538$ & $0.2308$ & $0.0062$ & $8$ & $0.15$ & $0.05$
\end{tabular}
\caption{Values of the main parameters}
\label{tab:param}
\end{table}
\par
The numerical solution of the background problem \eqref{eq:formaforte} relies on a Newton-Galerkin scheme. The spatial discretization is performed thanks to the P1-finite element space, i.e., the space of the continuous functions over $\Omega$ which are linear polynomials when restricted on each element of the mesh. The temporal discretization is done via an implicit Euler scheme. This leads to solve a nonlinear problem at each timestep, which is performed via a Newton iterative algorithm. More details on the solver can be found in \cite{art:RV}, where a thorough convergence analysis is performed. 

\subsection{Reconstruction of small inclusions}	
We study the effectiveness of the reconstruction algorithm in identifying the position of small ischemic regions within the cardiac tissue. For, we employ synthetic measurements of the perturbed boundary voltage in presence of ischemias located in different sections of the cardiac tissue: in the left ventricle, in the septum, or in the right ventricle. In each simulation, we consider circular ischemic regions of radius ranging from $1mm$ to $2mm$. In Figure \ref{fig:recon} we report the contour plot of the topological gradient $G$ in four different cases, and superimpose a black line representing the boundary of the ischemia that we aim to reconstruct.
\begin{figure}[h!]
			\centering \vspace{-0.5cm}
			\subfloat[Small inclusion in the septum \newline from epicardiac measurements]{
		    	\includegraphics[width=0.5\textwidth]{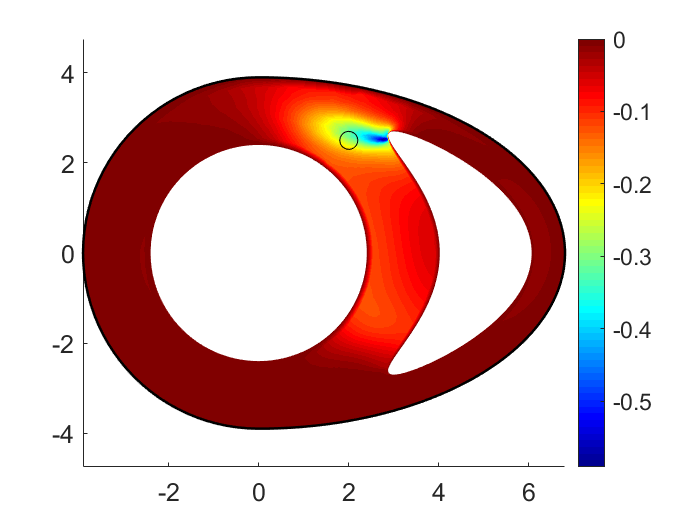}
			}
			\subfloat[Small inclusion in the Left Ventricle \newline from left-endocariac measurements]{
		    	\includegraphics[width=0.5\textwidth]{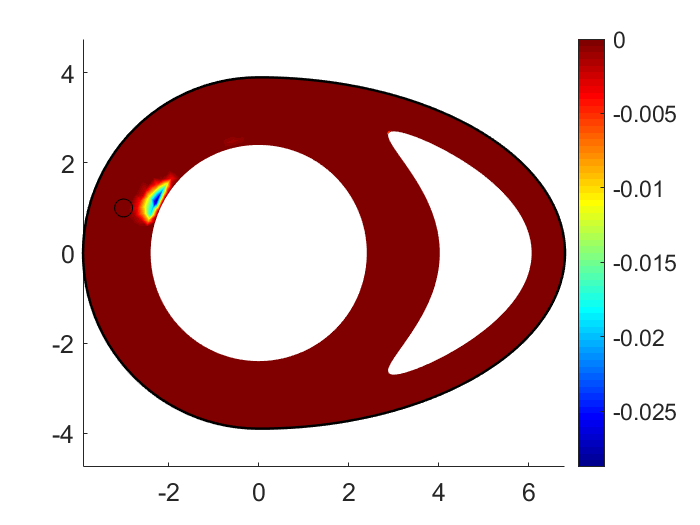}
			}\\
			\subfloat[Small inclusion in the Right Ventricle \newline from right-endocariac measurements]{
		    	\includegraphics[width=0.5\textwidth]{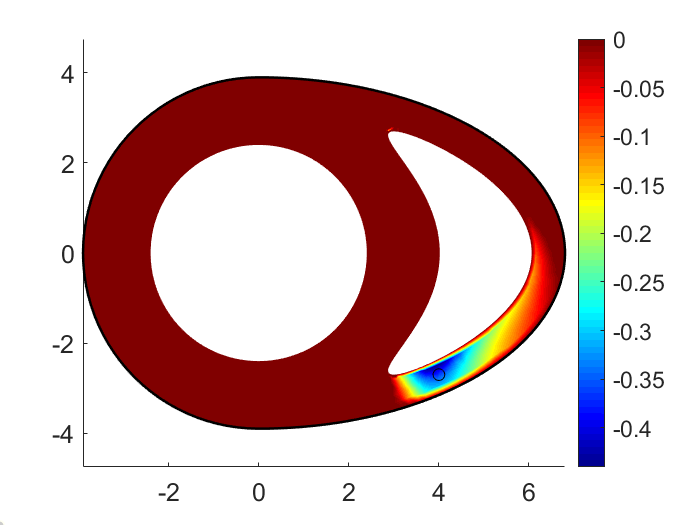}
			}
			\subfloat[Two small inclusions \newline from epicardiac measurements]{
		    	\includegraphics[width=0.5\textwidth]{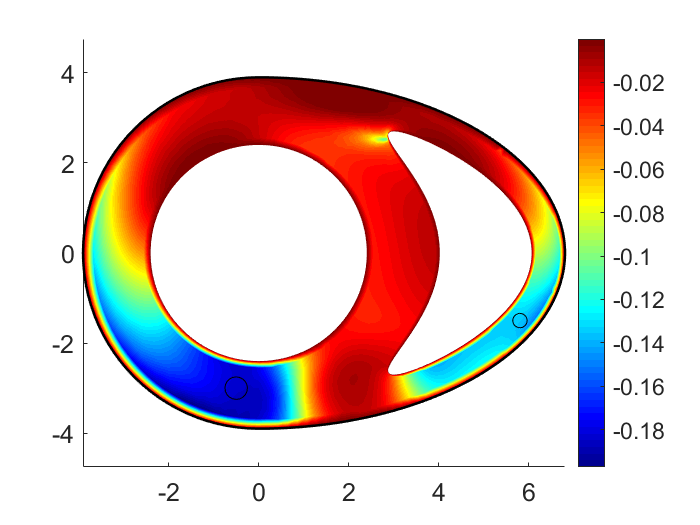}
			}
	\caption{Reconstruction of small inclusions}
	\label{fig:recon}
	\end{figure}
From an analysis of Figure \ref{fig:recon} we can deduce that the topological gradient always attains its negative minimum value close to the real position of the ischemia, and the accuracy of the localization may vary according to the position of the ischemia. We achieve significant results also in the case of multiple inclusions. When more than one ischemic region is present, the topological gradient shows a local minimum close to each region.
As depicted in the captions, some experiments employ the knowledge of the voltage on the epicardium and some on one endocardiac surface. The reconstruction appears fairly accurate in both cases. We remark that the algorithm is always effective when using epicardiac measurements, but fails in detecting ischemic regions located in a ventricle whenever the measurements are acquired on the inner surface of the other one. Nevertheless, even in those cases the technique avoids false positive detection, i.e., the topological gradient does not show significant local minima in wrong locations.

\subsection{Reconstruction from noisy measurements}
We now focus on the performance of the algorithm in presence of noisy measurements, in order to assess the stability of the algorithm with respect to small perturbations of the boundary data of the form
\begin{equation}
\widetilde{u}_{meas}(x,t) = u_{meas}(x,t) + \rho \eta(x,t),
\label{eq:noise}
\end{equation}
where $\eta(x,t)$ is a standard Gaussian random variable for each point $x$ and instant $t$, and $\rho \in [0,1]$ is the noise level.
\begin{figure}[h!]
			\centering \vspace{-0.5cm}
			\subfloat[Measurements without noise]{
		    	\includegraphics[width=0.5\textwidth]{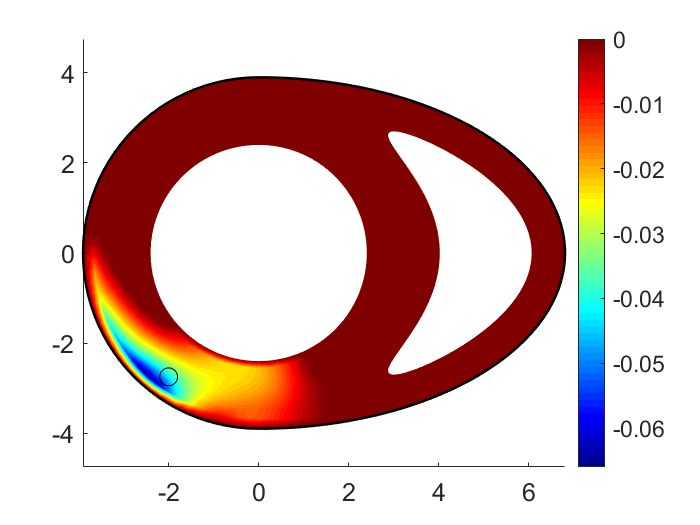}
			}
			\subfloat[Noise level: $5\%$]{
		    	\includegraphics[width=0.5\textwidth]{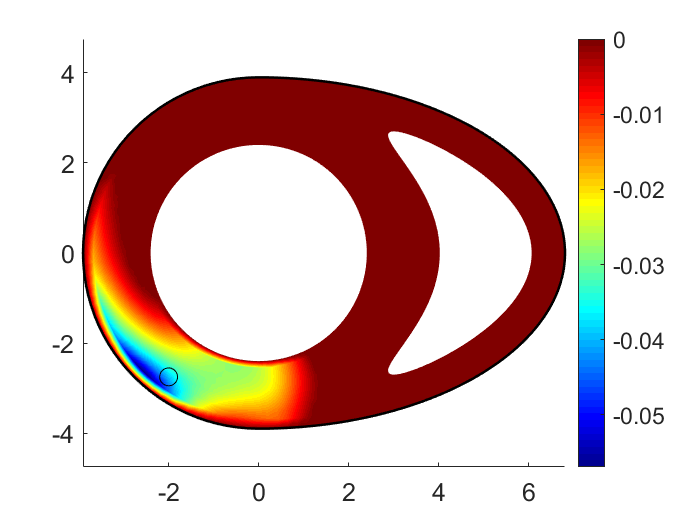}
			}\\
			\subfloat[Noise level: $10\%$]{
		    	\includegraphics[width=0.5\textwidth]{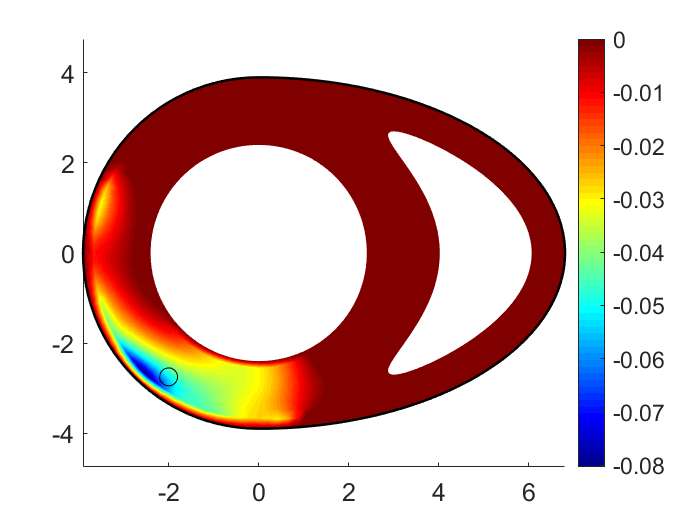}
			}
			\subfloat[Noise level: $15\%$]{
		    	\includegraphics[width=0.5\textwidth]{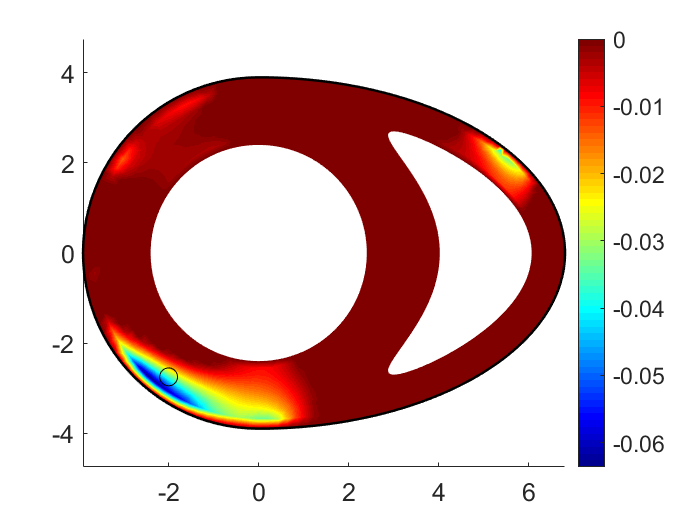}
			}
	\caption{Reconstruction in presence of noisy epicardiac measurements}
	\label{fig:noise}
	\end{figure}
	In Figure \ref{fig:noise} we report the contour plot of the topological gradient computed with noiseless measurements and compare it with the ones obtained with growing levels of noise. The algorithm shows to be stable even under significantly corrupted measurements: up to the level $\rho = 0.15$, the negative region in correspondence to the exact position is clearly identifiable.
	
	\subsection{Reconstruction of large inclusions}
	We eventually remark that the proposed algorithm produces significant results also when applied to measurements associated to large ischemic regions. The identification of larger regions could be performed by means of iterative reconstruction algorithms, as the one proposed in \cite{art:BRV} for a semilinear elliptic problem. Nevertheless, as it is shown in Figure \ref{fig:big}, the information coming from the topological gradient could be a suitable initial guess for such iterative algorithms.
\begin{figure}[h!]
			\centering \vspace{-0.5cm}
			\subfloat[Reconstruction of a large ischemia \newline from epicardiac measurements]{
		    	\includegraphics[width=0.5\textwidth]{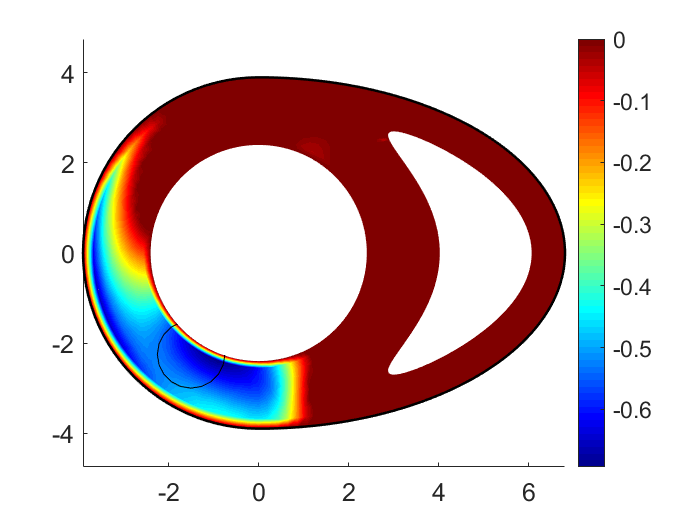}
			}
			\subfloat[Reconstruction of a large ischemia \newline from right-endocardiac measurements]{
		    	\includegraphics[width=0.5\textwidth]{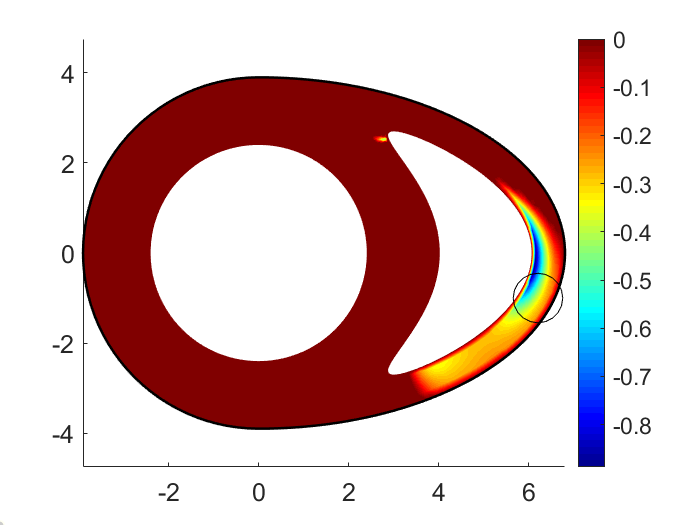}
			}
	\caption{Reconstruction of large ischemic regions}
	\label{fig:big}
	\end{figure}

\section{Conclusions}
In this paper, we have studied the identification of small ischemic regions in the cardiac tissue, represented by discontinuous alterations in the conductivity and in the nonlinear reaction term of the monodomain model, taking advantage of the measurement of the voltage on the boundary of the heart. We have extended the existing results regarding the well-posedness of the direct problem and derived a rigorous asymptotic expansion of the perturbed boundary voltage, which allows to formulate an effective reconstruction algorithm based on topological optimization.
\par
We foresee several significant extensions of the presented analysis, both from an analytical and a numerical viewpoint. The coupling of the monodomain model of the heart with a passive conductor model for the surrounding torso (see \cite{art:boulakia}) would enable us to make use of ECG data for reconstruction purposes, possibly comparing the results with the ones obtained in \cite{art:lyka},\cite{wang2013inverse} with a stationary version of the bidomain model. 
\par
The effectiveness of the reconstruction algorithm should also be tested in a three-dimensional setting; in such a context, due to the high computational cost of the numerical simulation, a Reduced Order Modeling approach could be considered, as recently analysed in \cite{art:pagani}. 
\par
Finally, the detection of arbitrarily large ischemic regions can be tackled, exploiting an analogous strategy as the one adopted in \cite{art:BMR} (possibly reducing the computational effort by employing an adaptive solver of the monodomain system, as proposed in \cite{art:RV}).

\section*{Acknowledgments}
The authors are grateful to Roberto Gianni for his useful suggestions. The authors thank the New York University of Abu Dhabi for its kind hospitality. The work of the authors has been supported by GNAMPA (Gruppo Nazionale per l'Analisi Matematica, la Probabilità e le loro Applicazioni). The numerical simulations presented in this work have been performed thanks to the MATLAB library redbKIT, \cite{redbKIT}.

\printbibliography

\end{document}